\documentclass[11pt]{article}
\usepackage{graphicx} 
\usepackage{amsmath,amssymb,amsthm}
\usepackage{mathtools,url}
\usepackage{graphicx}
\usepackage{comment}
\usepackage[dvipsnames]{xcolor}

\usepackage{letltxmacro}
\makeatletter
\let\oldr@@t\r@@t
\def\r@@t#1#2{%
\setbox0=\hbox{$\oldr@@t#1{#2\,}$}\dimen0=\ht0
\advance\dimen0-0.2\ht0
\setbox2=\hbox{\vrule height\ht0 depth -\dimen0}%
{\box0\lower0.4pt\box2}}
\LetLtxMacro{\oldsqrt}{\sqrt}
\renewcommand*{\sqrt}[2][\ ]{\oldsqrt[#1]{#2}}
\makeatother

\makeatletter
\DeclarePairedDelimiterX{\pmodx}[1]{(}{)}{{\operator@font mod}\mkern6mu#1}
\renewcommand{\pmod}{%
  \allowbreak
  \if@display\mkern18mu\else\mkern8mu\fi
  \pmodx
}
\makeatother

\theoremstyle{plain}
\newtheorem{theorem}{Theorem}
\newtheorem*{theorem*}{Theorem}
\newtheorem{lemma}{Lemma}
\newtheorem{corollary}{Corollary}
\newtheorem*{corollary*}{Corollary}
\newtheorem{definition}{Definition}

\theoremstyle{definition}

\newtheorem*{remark*}{Remark}

\newcommand{\+}[1]{{#1}^\times}
  
\def\Aff{\operatorname{Aff}}
\DeclareMathOperator{\Ker}{Ker}
\DeclareMathOperator{\Ran}{Ran}

\DeclareMathOperator{\ord}{ord}
\DeclareMathOperator{\lcm}{lcm}

\DeclareMathOperator{\rad}{rad}

\def\R{{\mathbb R}}

\def\Z{{\mathbb Z}}
\def\C{{\mathbb C}}

\def\H{\widetilde{H}}

\def\mD{\overline{D}}
\def\sD{D^{\,\#}}

\def\a{{\alpha}}
\def\b{{\beta}}
\def\s{{\sigma}}
\def\k{{\varkappa}}
\def\pfi{{\varphi}}
\def\e{{\varepsilon}}

\def\id{{\rm id\,}}

\title{$N$-ary groups of panmagic permutations\\ from the Post coset theorem}
\author{Sergiy Koshkin$^*$ and Jaeho Lee$^\dagger$\\
\\
$^*$Corresponding author\\
Department of Mathematics and Statistics\\
University of Houston-Downtown\\
1 Main Street\\
Houston, TX 77002\\
e-mail: \texttt{koshkins@uhd.edu}
\\
\\
$^\dagger$Spring Branch Academic Institute\\
14400 Fern Drive\\
Houston, TX 77079\\
e-mail: \texttt{leejaeho0802@gmail.com} 
}

\date{}

\begin{document}

\maketitle

\begin{abstract}
Panmagic permutations are permutations whose matrices are panmagic squares, better known as maximal configurations of non-attacking queens on a toroidal chessboard. Some of them, affine panmagic permutations, can be conveniently described by linear formulas of modular arithmetic, and we show that their sets are a generalization of groups with $N$-ary multiplication instead of binary one. With the help of the Post coset theorem, we identify panmagic $N$-ary groups as cosets of the dihedral subgroup and its extensions in the group of all affine permutations. We also investigate decomposition of panmagic permutations into disjoint cycles and find many connections with classical topics of number theory and combinatorics: square-free numbers, $4k+1$ primes, quadratic residues, cycle indices from Polya counting, and linear congruential generators.
\medskip

\textbf{Keywords}: magic square, panmagic square, modular queens, affine permutation, dihedral group, general affine group, polyadic group, Post coset theorem, Post covering group
\medskip

\textbf{MSC}: 05B15, 20N15, 15A51, 05A05
\end{abstract}

\section{Introduction}

Magic squares are square matrices with the same sum, called the {\it magic sum}, in each row, each column, the main diagonal and the main anti-diagonal. They attracted attention of mathematicians since Euler's 1776 article linked them to Latin squares \cite{BS2}. Magic squares that additionally have the same magic sum over all `broken' diagonals and anti-diagonals were once called ``diabolic" \cite{Lehm}, but now are commonly known as pandiagonally magic or {\it panmagic} for short \cite{AlKi,BS1}. 

Back in 1950, Lawden claimed that in even dimensions multiplying any three panmagic squares produces another panmagic square \cite{Lawd}. In fact, Lawden's result only holds for panmagic squares with additional symmetry, but it was the first result (that we know of) on closure under ternary multiplication for a class of magic squares. Other classes of magic squares with the same property were discovered in 1990s. Thompson might have been the first to prove it in print for $3\times3$ magic and $5\times5$ panmagic squares in 1994 \cite{Thom}. In 2012, known classes of magic and panmagic squares closed under ternary multiplication were codified and generalized to higher dimensions by Nordgren \cite{Nord}. 

Thompson's $5\times5$ example stood out from the rest. He found a spanning set of $5\times5$ panmagic permutation matrices that is itself closed under ternary multiplication, while no such spanning set exists for $3\times3$ magic or Lawden's panmagic squares. Moreover, this spanning set is closely associated with the dihedral group $D_5$, the group of symmetries of the regular pentagon. The nature of this association remained obscure, but it suggested to us looking into permutations with panmagic matrices, which are simpler objects than the matrices themselves.

Although quite natural, the term ``panmagic permutations" is rarely used (it is used in \cite{AlKi,KoLeeP}), but they are classically known under a different name. A problem from the mid-19th century, often misattributed to Gauss \cite{BS2}, asked to place $8$ non-attacking queens on a chessboard, and it was generalized to $n\times n$ chessboards by Lionnet in 1869. In 1900, Carpenter proposed to fold the board into a cylinder by identifying its two opposite sides \cite{Carp}, and Polya further folded it into a torus in 1918. Both modifications have the same effect of letting the queens attack along the entire broken diagonals, and gave rise to the {\it modular $n$-queens problem} \cite{BS2}. Given its solution, replace the board with a matrix and place $1$-s into the positions of the queens and $0$-s elsewhere as on Figure\,\ref{ChessPerm}. The result is a panmagic permutation matrix, and conversely, any panmagic permutation produces a modular $n$-queens solution. 
\begin{figure}[!ht]
\vspace{-0.3em}
\begin{centering}
\includegraphics[scale=0.18]{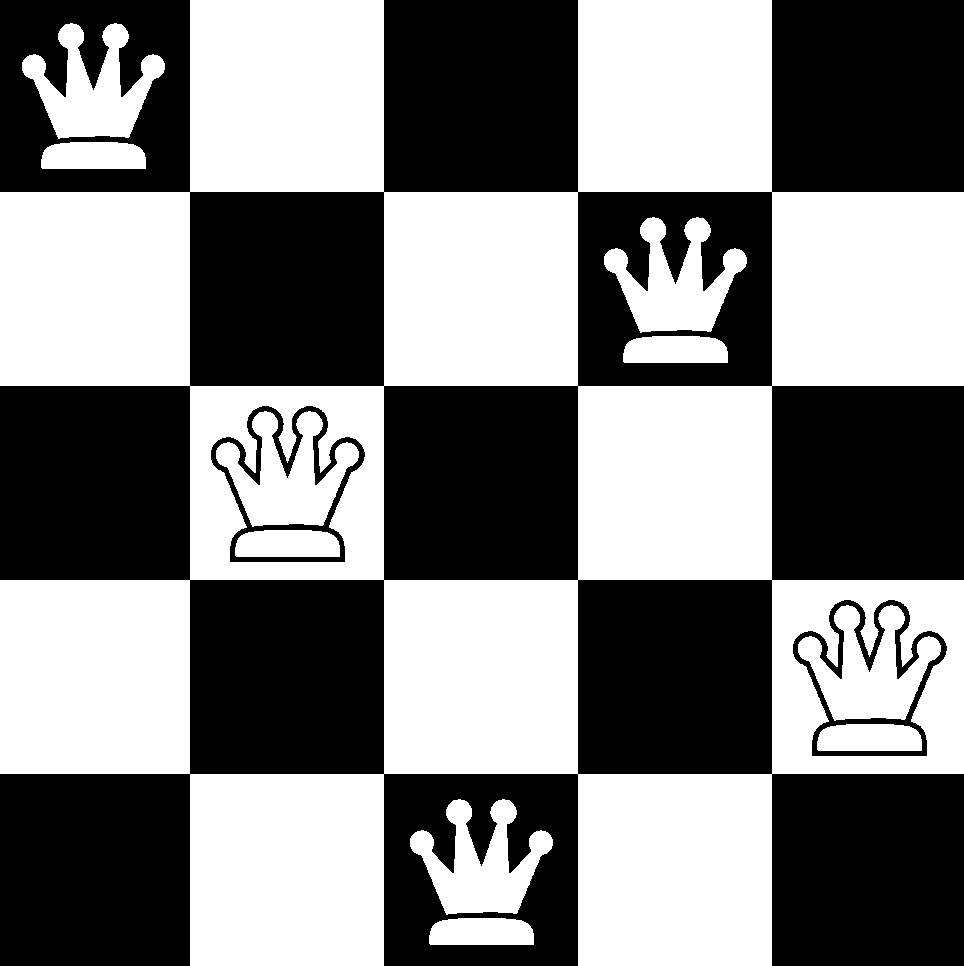}\hspace{3em} \includegraphics[scale=0.9]{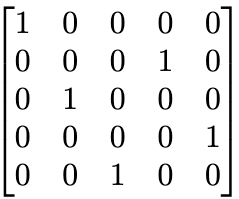}
\par\end{centering}
\caption{\label{ChessPerm} Modular $5$-queens solution and its permutation matrix.}
\end{figure}
\vspace{-0.5em}
In this guise, panmagic permutations have been extensively studied \cite{BS2}, but not, it seems, for their group-like properties. Even the connection between modular $n$-queens and panmagic squares is little known \cite{BS1} despite being used as early as in 1900 by Planck \cite{BS2}.

For that matter, ternary and more general $N$-ary groups are also little known, but the topic is, again, classical. It was originally developed by D\"ornte in 1920-s, at the urging of Emmy Noether \cite{GBV,Shah}. Post, better known for his work in mathematical logic, proved a key structural result about them in 1940, the Post coset theorem \cite{Post}. It says, essentially, that $N$-ary groups are cosets of normal subgroups when the quotient group is cyclic of order $N-1$, and explains the appearance of $D_5$ in Thompson's example. The group that contains the coset is called its {\it Post cover}, and the normal subgroup it is a coset of is called its {\it Post associate}. 

For $N$-ary groups of panmagic permutations, the Post covers are subgroups of the group of {\it affine permutations} $\Aff(\Z_n)$, those that can be expressed by linear formulas of modular arithmetic in $\Z_n$. And the Post associates are extensions of the dihedral group $D_n$ isomorphic to the group of symmetries of the regular $n$-gon. As we will prove, those turn out to be the groups of permutations that preserve the panmagic property when multiplied by panmagic permutations.

Using the Post coset theorem, we will show that affine panmagic $N$-ary subgroups of $S_n$ exist in all dimensions where panmagic permutations exist. Moreover, we will compute their minimal arities, characterize $n$ for which subgroups of a given arity exist, describe cycle types of permutations in them and subgroups where all permutations have the same cycle type. Even for the ternary case, our descriptions are much more explicit than Nordgren's in terms of linear algebra \cite{Nord}. Our results also relate affine panmagic permutations to some classical topics of number theory and combinatorics: square-free numbers, $4k+1$ primes, quadratic residues, cycle indices from Polya counting, and linear congruential generators \cite{BK-V,IrRos,Mars,Nath,Rosen}. Affine panmagic permutations and their $N$-ary groups for prime dimensions were studied in \cite{KoLeeP}, to which this paper is a follow-up.

The paper is organized as follows. In Section \ref{prelim} we introduce some definitions and notation, and in Section \ref{N-ary} basics of $N$-ary group theory that we need to describe panmagic $N$-ary subgroups. Section \ref{Affine} introduces the group of affine permutations, and  Section \ref{PMagPres} panmagic-preserving subgroups that will serve as their Post associates. Our application of the Post coset theorem is based on using suitable homomorphisms with cyclic ranges that are constructed in Section \ref{SlopeSq}. Our main results on $N$-ary subgroups of affine panmagic permutations are proved in Sections \ref{SqCyc},   \ref{StrictNary}, and on their cycle types in Section \ref{CycAffPan}. In the final Section \ref{Conc}, we draw our conclusions and outline some directions of future work.

\section{Preliminaries}\label{prelim}

This section introduces some notation and terminology used in the paper, mostly following \cite{Gallian,Nord,Rosen}. In particular, we will use standard notation and terminology of set theory and group theory \cite{Gallian}. 

The number of elements in a set $S$ will be denoted by $|S|$, products of sets in groups by elements and each other by $aS:=\{as\mid s\in S\}$ and $ST=\{st\mid s\in S\,,t\in T\}$. The subgroup of a group generated by elements of its subset $S$ will be denoted $\langle S\rangle$, and simply $\langle a_1,\dots,a_k\rangle$ when $S=\{a_1,\dots,a_k\}$.

The symmetric group of all permutations of $1,2,\dots,n$ is denoted  $S_n$, and its identity element is denoted $\id\!.$ We will identify $S_n$ with the group of permutations of $\Z_n=\{0,1,\dots n-1\}$, the set of residues (congruence classes) modulo $n$. To recover a standard permutation of $1,\dots,n$, one simply replaces residue $0$ by $n$. 

Two special permutations will play a central role in our study: the flip permutation $\phi$ that swaps antipodal numbers, i.e. $\phi(i):=n+1-i$, and the cyclic shift permutation $\k$ that shifts numbers one unit up and $n$ to $1$, $\k(i)\equiv i+1\pmod{n}$. In the cyclic notation, $\phi=(1\,n)(2\,n-1)\dots$ and $\k=(1\,2\dots n)$. Both $\phi$ and $\k$ depend on $n$, but we omit $n$ from notation as it will be clear from the
context.

The subgroup $D_n:=\langle\phi,\k\rangle$ of $S_n$ will be called {\it dihedral group} and its elements {\it dihedral permutations} (note that some authors denote this group $D_{2n}$ by the number of elements in it). If we number the vertices of a regular $n$-gon from $1$ to $n$ then  $\phi$ is the permutation induced by the reflection about the middle perpendicular of one of its sides, and $\k$ is induced by the rotation by the angle $\frac{2\pi}{n}$ around its center. Recall that in $D_n$ the following relations hold: $\phi^2=\k^{n}=\id$ and $\phi\k=\k^{-1}\phi$. They imply that all dihedral permutations are of the form $\k^i$ or $\phi\k^i$ with $i=0,\dots,n-1$.

Congruence formulas for residues will often be written in a simplified notation that omits the modulus $n$ when it is understood, and replaces $\equiv$ by $=$\,. For example, we may write $n\equiv0\pmod{n}$ as just $n=0$ when no confusion results. In our simplified notation, $\phi(i)=1-i$ and $\k(i)=i+1$. The same notational convention applies to matrix indices when they go over $n$ or under $1$. For instance, in an $n\times n$ matrix $a_{n+1\,1}$ is $a_{11}$. The {\it broken diagonals} of an $n\times n$ matrix $A$ contain those entries $a_{ij}$ where $i-j$ is a fixed constant, and the {\it broken anti-diagonals} contain those where $i+j$ is a fixed constant. The {\it main diagonal/anti-diagonal} is the one with that constant equal to $0$.

A square matrix $A$ is called {\it semimagic} when all of its column sums and row sums are the same. Their common value is called the {\it magic sum}. We will be mainly interested in matrices $P_\s$ with $\s\in S_n$ that permute the standard basis vectors $e_i$ as in $P_\s e_i:=e_{\s(i)}$. Clearly, all permutation matrices are semimagic with the magic sum $1$. Moreover, $P_{\s\tau}=P_\s P_\tau$, where on the left we have the composition of permutations and on the right the product of matrices. 

A semimagic matrix (square) is called {\it magic} when its main diagonal and anti-diagonal sums are also equal to the magic sum. And it is called {\it panmagic} when its broken diagonal and broken anti-diagonal sums are all equal to the magic sum. A permutation will be called magic or panmagic when its matrix is such.

The effect of multiplying a matrix by $P_{\phi}$ is to swap $i$-th and $(n+1-i)$-th rows/columns. This also interchanges (broken) diagonals and anti-diagonals. In turn, multiplication by $P_{\k}^i=P_{\k^i}$ on the left/right cyclically shift its rows/columns by $i$ positions and moves the main diagonal to one of the broken diagonals. 

Note that the number of $1$-s on the diagonal of $P_\s$ is the number of {\it fixed points} of $\s$, those for which $\s(i)=i$. Similarly, since $\phi$ flips points about the middle  the number of $1$-s on the anti-diagonal of $P_\s$ is the number of {\it flipped points} of $\s$, those with $\s(i)=\phi(i)$. Thus, a permutation is magic when it has exactly one fixed and exactly one flipped point, and it is panmagic when all of its cyclic shifts $\k^i\s$ are also magic.

An elementary observation that goes back to Lionnet, in connection with the 19-th century $n$-queens problem, is that $\s\in S_n$ is panmagic if and only if $i\mapsto\sigma(i)\pm i$ are also both permutations \cite{AlKi,BS1}. And a classical theorem of Polya from 1918 states that $S_n$ contains panmagic permutations if and only if $2,3\nmid n$ \cite{BS1}.
A positive integer $n$ will be called {\it Polya dimension} when $n>1$ and $2,3\nmid n$.
The smallest Polya dimension is $n=5$ of Thompson's example, and we will assume that $n$ is Polya from now on, unless otherwise stated.

\section{$N$-ary subgroups}\label{N-ary}



$N$-ary groups can be defined abstractly, like ordinary groups, but we will only need $N$-ary {\it sub}groups of ordinary groups and those are easier to define. One can prove that any abstract $N$-ary group is isomorphic to an $N$-ary subgroup \cite{GBV,Shah}.  
\begin{definition}\label{NaryG}
A subset $C\subseteq G$ of a group $G$ is called its $N$-ary subgroup when for any $N$ elements $a_1,\dots,a_N\in C$, the product $a_1\cdots a_N\in C$; and for any $a\in C$, the element $a^{3-2N}\in C$. 
\end{definition} 
\noindent The first condition replaces closure under binary multiplication by its $N$-ary analog and the second replaces closure under taking inverses. When $N=2$ we recover the definition of ordinary (binary) subgroup. 

In the ternary case, $a^{3-2N}=a^{-3}$, but we still get closure under inversion because $a^{-1}=aaa^{-3}\in C$ as a triple product. In fact, for $N\geq3$ we can generally replace the condition $a^{3-2N}\in C$ with a simpler equivalent one $a^{2-N}\in C$. Indeed, $a^{2-N}=a^{N-1}a^{3-2N}$ and $a^{3-2N}=a^{N-3}\left(a^{2-N}\right)^3$, and both are $N$-ary products for $N\geq3$. The element $a^{2-N}$ is called {\it skew to $a$} and is typically used as the replacement for the inverse in the $N$-ary group theory \cite{GBV,Shah}. 

We did not say anything about the identity element $e$ and this is by design. In the binary case, $e$ will be in $C$ because we can multiply any element by its inverse. But also conversely, if $e\in C$ then $C$ is a binary subgroup because we can reproduce binary products by taking $N$-ary ones with $N-2$ copies of $e$ in them. Indeed, under Definition \ref{NaryG} any binary subgroup is also $N$-ary for any $N\geq3$. 

Nonetheless, there are non-binary $N$-ary subgroups for $N>2$. In fact, many of them are quite familiar, albeit rarely thought about this way. For example, odd integers form a ternary (additive) subgroup of $\Z$. Similarly, odd permutations form a non-binary ternary subgroup of $S_n$. Non-zero purely imaginary numbers $i\+{\R}$ form a (multiplicative) ternary subgroup of non-zero complex numbers $\+{\C}$. Indeed, $ia\cdot ib\cdot ic=i(-abc)$ and $(ia)^{-1}=i(-a)$. 

Thompson's ternary panmagic subgroup of $S_5$ \cite{Thom} can be described as 
\begin{equation}\label{psiD5}
\psi D_5=\{\psi,\psi\k,\dots,\psi\k^4,\psi\phi,\psi\phi\k,\dots,\psi\phi\k^4\},
\end{equation}
where $\psi(i):=3-2i$, i.e., $\psi=(1)(2\,4\,5\,3)$. It is a simple exercise that $\phi\psi=\psi\phi\k$, $\k\psi=\psi\k^2$ and $\psi^2=\phi\k^{4}$. Using these relations, one can check that triple products and inverses of elements of $\psi D_5$ are again in $\psi D_5$.

It turns out that $N$-ary subgroups admit a simple construction in terms of classical group theory. It constitutes the `easy' direction of the Post coset theorem \cite{GBV,Post,Shah}.
\begin{theorem}[Post]\label{PostConv} Let $H\subset\H\subseteq G$ be a pair of (binary) subgroups of $G$ with $H$ normal in $\H$ and $\H/H\simeq\Z_{N-1}$. Then every coset of $H$ in $\H$ is an $N$-ary subgroup of $G$.
\end{theorem}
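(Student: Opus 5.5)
Let $C=aH$ be a coset of $H$ in $\H$, with $a\in\H$. The plan is to verify the two conditions of Definition~\ref{NaryG} directly, using the quotient map $\pi:\H\to\H/H\simeq\Z_{N-1}$. Since $H$ is normal in $\H$, left and right cosets coincide. The condition $x\in aH$ for $x\in\H$ is therefore equivalent to $\pi(x)=\pi(a)$, so every verification reduces to a computation in the cyclic group $\H/H$.

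The first subtlety is which cosets qualify: the statement says ``every coset'', so a priori $a$ can be any element of $\H$. Here is where cyclicity and the precise order $N-1$ enter. For any $g$ in a group of order $N-1$ we have $g^{N-1}=e$, hence $g^{N}=g$. I would stress that only the exponent $N-1$ of the quotient is really used, not the fact that it is cyclic. Cyclicity matters for the converse direction of Post's theorem, not here.

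Closure under $N$-ary products: take $a_1,\dots,a_N\in C$, so $\pi(a_k)=\pi(a)$ for all $k$. Then $a_1\cdots a_N\in\H$ and
$$\pi(a_1\cdots a_N)=\pi(a)^N=\pi(a)^{N-1}\pi(a)=\pi(a).$$
Hence $a_1\cdots a_N\in aH=C$.

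Closure under skew elements: for $b\in C$ we have $\pi(b^{3-2N})=\pi(a)^{3-2N}$. Since $3-2N\equiv 1\pmod{N-1}$, because $3-2N-1=-2(N-1)$, this equals $\pi(a)$. Hence $b^{3-2N}\in C$.

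There is no real obstacle. The only points needing care are the invariance of the coset under the choice of representative, which follows from normality of $H$ in $\H$, and the congruence of the exponents $N$ and $3-2N$ to $1$ modulo $N-1$. I would also remark in passing that for $N=2$ the argument recovers the trivial statement that $H$ itself is a subgroup.
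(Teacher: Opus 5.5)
Your proof is correct and is essentially the paper's argument in quotient-group language. The paper commutes the $h_i$ past the representative $b$ using normality and then uses $b^{N-1}\in H$, which is exactly your computation $\pi(a)^N=\pi(a)$ in the quotient; as you note, both proofs only need the exponent of the quotient to divide $N-1$. The one cosmetic difference is that you check $b^{3-2N}\in C$ directly via $3-2N\equiv 1$ modulo $N-1$, whereas the paper checks the skew element $a^{2-N}$ and relies on its earlier remark that the two conditions are equivalent for $N\geq3$.
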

\begin{proof} Let $C=bH$ for $b\in\H$. Since $H$ is normal in $\H$, for any $h\in H$ and $b\in\H$ we have $hb = bh'$ for some $h'\in H$. And since $\H/H$ is cyclic of order $N-1$, we have $b^{N-1}\in H$ for any $b\in\H$. Therefore, for $a_i=bh_i$ with $h_i\in H$, by induction,
$$
a_1\cdots a_N=bh_1 bh_2\cdots bh_N=b(bh_1'h_2b\cdots bh_N)=\dots=b(b^{N-1}\hat{h})\in bH=C.
$$
Similarly, since $(b^{-1})^{N-1}\in H$, moving $b^{-1}$-s to the left we obtain for $a=bh$, 
$$
a^{2-N}=(h^{-1}b^{-1})^{N-2}=(b^{-1})^{N-2}\hat{h}=b\big((b^{-1})^{N-1}\hat{h}\big)\in bH=C.
$$
Thus, $C$ is an $N$-ary subgroup.
\end{proof}
\noindent One can check that for the purely imaginary numbers we have $G=\+{\C}$, $H=\+{\R}$, and $\H=\langle i,\+{\R}\rangle=\+{\R}\cup i\+{\R}$. For the Thompson's ternary group, $G=S_5$, $H=D_5$ and $\H=\langle\psi,\phi,\k\rangle$. We will give an explicit description of $\H$ in Section \ref{Affine}. 

The hard direction of the Post coset theorem states that any $N$-ary group arises as a Post coset up to isomorphism, i.e. it is isomorphic to a coset of an index $N-1$ normal subgroup of a larger group with the cyclic quotient. It was proved by Post in his 1940 seminal paper on $N$-ary groups \cite{Post}. We will not elaborate further as we will not need it, but it does justify the following terminology \cite{GBV,Shah}.
\begin{definition}\label{PostCover}
Given an $N$-ary subgroup $C\subseteq G$, the group $H=C^{N-1}$ will be called its Post associate and the group $\H=\langle C\rangle$ its Post cover.
\end{definition} 
\noindent Post himself called them associated and covering group, respectively, without attaching his name. 

In general, the arity $N$ given by the Post coset theorem is not the smallest possible for a given coset as a set with multiplication. Indeed, if $C^4=H$ then $(C^2)^2=H$, so $C^2$ will not just be quinternary like $C$, but also ternary. Conversely, if $C$ is $N'$-ary then we can take the product of its $N'$ elements, multiply it by $N'-1$ more elements, then by $N'-1$ more, and so on. By the closure assumption, all of those products will again be in $C$. This means that closure under $N'$-ary multiplication implies closure under $N$-ary multiplication for any $N=N'+k(N'-1)$. In particular, closure under ternary products implies closure under any odd-numbered products. In the light of this ambiguity, we would like to distinguish the smallest possible arity for each coset.
\begin{definition} A subset $C$ of a group will be called strictly $N$-ary, and $N$ its strict arity, when it is closed under $N$-ary multiplication, but not under $N'$-ary multiplication with any $N'<N$. 
\end{definition}
\noindent It follows from the Post coset theorem that the strict arity is $N=\ord(C)+1$, where $\ord(C)$ is the order of $C$ as an element of the quotient group $\H/H$. Indeed, it is the smallest $i$ such that $C^i=C$.

\section{Group of affine permutations $\boldsymbol{\Aff(\Z_n)}$}\label{Affine}

In this section, we introduce the group whose subgroups will serve as Post covers for panmagic $N$-ary subgroups. Its elements are permutations realized by affine transformations of $\Z_n$ analogous to affine transformations of $\R$. 
An {\it affine transformation} of $\Z_n$ is a map of the form $\pi_{a,b}(i)\equiv ai+b\pmod{n}$ with some parameters $a,b\in\Z_n$, where $a$ is said to be the {\it slope} of the transformation. Clearly, $\pi_{a,b}$ is invertible, and hence a permutation of $\Z_n$, if and only if $a$ is invertible in $\Z_n$. It is a standard fact of number theory that this is happens if and only if $a$ and $n$ are relatively prime \cite{Rosen}.

\begin{definition}\label{pia} A permutation in $S_n$ is called affine when it is given by an invertible affine transformation of $\Z_n$. The group of affine permutations, also known as the general affine group of degree $1$ over $\Z_n$, is denoted $\Aff(\Z_n)$. 
\end{definition}
\noindent Aside from modular queens, affine permutations come up in enumerative combinatorics \cite{BK-V}, pseudo-random number generation \cite{HD,Mars}, and in pure number theory, for example, in Zolotarev's lemma from a proof of quadratic reciprocity. As with the dihedral group $D_n$, we will not distinguish between $\Aff(\Z_n)$ and its isomorphic representation by permutations, i.e. we will also treat it as a subgroup of $S_n$.  Let us denote $\+{\Z}_n$ the group of invertible residues (units) of $\Z_n$, then
\begin{equation}\label{GAmod}
\Aff(\Z_n)=\{\s(i)=ai+b\mid a\in\+{\Z}_n,\,b\in\Z_n\}.
\end{equation}
We leave it to the reader to show that $\Aff(\Z_n)$ can also be represented by $2\times2$ upper triangular matrices of the form $\begin{pmatrix}
a  &  b\\
0  &  1\end{pmatrix}$ with the usual matrix multiplication. 

We abbreviate $\pi_{a,0}$ as $\pi_a$, and these permutations will serve as convenient coset representatives since their powers are easy to express, $\pi_a^k=\pi_{a^k}$. Every affine permutation is a product of $\pi_a$ and a power of $\k$, indeed, $\pi_{a,b}=\k^b\pi_a$. Therefore, $\Aff(\Z_n)=\langle\pi_a,D_n\mid a\in\+{\Z}_n\rangle$. The commutation relations for $\pi_a$ are also easy to derive, they are $\pi_a\phi=\k^{a-1}\phi\pi_a$ and $\pi_a\k=\k^a\pi_a$. Thompson's permutation in \eqref{psiD5} is $\psi=\pi_{-2,3}=\k^3\pi_{-2}$, and $\psi D_5=\pi_{-2}D_5$. Since $-2$ is a primitive root modulo $5$ it generates all of $\+{\Z}_5$, and hence $\pi_{-2}$ generates $\pi_a$ for all units $a$. Therefore, the Post cover of $\psi D_5$ is $\langle\psi,\phi,\k\rangle=\langle\pi_{-2},D_5\rangle=\Aff(\Z_5)$.

A simple consequence of Lionnet's combinatorial characterization of panmagic permutations is the following.
\begin{corollary}\label{AfPmag} A permutation $\pi_{a,b}\in \Aff(\Z_n)$ is dihedral if and only if $a=\pm1$, and is panmagic if and only if $a,a\pm1\in\+{\Z}_n$. It is magic if and only if it is panmagic.
\end{corollary}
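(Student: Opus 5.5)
The corollary has three claims: the dihedral criterion, the panmagic criterion, and the equivalence of magic and panmagic. Each follows directly from the explicit formula $\pi_{a,b}(i)=ai+b$ together with the facts collected in Section \ref{prelim}.

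\emph{Dihedral part.} Dihedral permutations are $\k^j$ and $\phi\k^j$. In the simplified notation these are $i\mapsto i+j$ and $i\mapsto 1-(i+j)=-i+(1-j)$, so they are exactly the affine maps with slope $\pm1$. Conversely, $\pi_{1,b}=\k^b$ and $\pi_{-1,b}=\phi\k^{1-b}$. To see that a permutation cannot be both $\pi_{a,b}$ with $a\neq\pm1$ and dihedral, note that the pair $(a,b)$ is determined by the map: $b=\pi_{a,b}(0)$ and $a=\pi_{a,b}(1)-b$.

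\emph{Panmagic part.} By Lionnet's criterion, $\s$ is panmagic if and only if $i\mapsto\s(i)\pm i$ are both permutations. For $\s=\pi_{a,b}$ these maps are $i\mapsto(a\pm1)i+b$. An affine map of $\Z_n$ is a bijection exactly when its slope is a unit, as recalled before Definition \ref{pia}. Hence $\pi_{a,b}$ is panmagic if and only if $a\pm1\in\+{\Z}_n$; the condition $a\in\+{\Z}_n$ holds automatically since $\pi_{a,b}$ is a permutation.

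\emph{Magic equals panmagic.} One direction is trivial. For the other, use the reformulation that $\s$ is magic if and only if it has exactly one fixed point and exactly one flipped point. Fixed points of $\pi_{a,b}$ are the solutions of $(a-1)i\equiv -b$. Flipped points are the solutions of $ai+b\equiv 1-i$, that is, $(a+1)i\equiv 1-b$.

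The key step is the following standard fact. The congruence $ci\equiv d \pmod n$ has either $0$ or $\gcd(c,n)$ solutions. So it has exactly one solution if and only if $\gcd(c,n)=1$, since when $\gcd(c,n)=1$ the unique solution is $i=c^{-1}d$. Applying this with $c=a-1$ and $c=a+1$ shows that magic forces $a\pm1\in\+{\Z}_n$, which is panmagic by the previous part.

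The only mildly delicate point is this solution count for linear congruences, including the possibility of zero solutions. It is elementary, since the solution set, when nonempty, is a coset of the kernel of multiplication by $c$, and that kernel has size $\gcd(c,n)$.
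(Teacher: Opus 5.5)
Your proof is correct and follows the paper's route. The panmagic criterion is Lionnet's characterization applied to the slopes $a\pm1$. Your count of solutions of $ci\equiv d\pmod{n}$ (a coset of the kernel, hence $0$ or $\gcd(c,n)$ solutions) is exactly the paper's remark that an affine map injective at a single point is injective everywhere, applied to the fixed-point and flipped-point equations; your explicit dihedral argument via $\pi_{1,b}=\k^b$ and $\pi_{-1,b}=\phi\k^{1-b}$ just fills in a step the paper leaves implicit.
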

\noindent Indeed, $i\mapsto\pi_{a,b}(i)\pm i$ are permutations if and only if their slopes, $a\pm1$, are invertible. The last claim follows from the fact that an affine transformation is injective if and only if it is injective at a single point. Since these properties of affine permutations entirely depend on their slopes $a$ we will also call units of $\Z_n$ dihedral or panmagic accordingly. 

It is easy to see now why panmagic permutations exist in all Polya dimensions. Permutations $\pi_2$ and $\pi_3$ are panmagic whenever $2,3\nmid n$ because $2,2\pm1$ and $3,3\pm1$ are units for any such $n$. Moreover, since at least one of $a-1$, $a$, and $a+1$ is divisible by $2$ and at least one by $3$, we see directly that affine panmagic permutations do not exist when $n$ is even or divisible by $3$. The Polya theorem proves a stronger claim that even non-affine panmagic permutations do not exist in those dimensions. 

The number of units in $\Z_n$ is given by Euler's totient function $\pfi(n):=|\+{\Z}_n|$. When $n=p_1^{\a_1}\cdots p_m^{\a_m}$ is the prime factorization of $n$, we have \cite{Rosen}
\begin{equation}\label{totient}
\pfi(n)=n\left(1-\frac1{p_1}\right)\cdots\left(1-\frac1{p_m}\right).
\end{equation}
We now see from \eqref{GAmod} that $|\Aff(\Z_n)|=\pfi(n)n$. The number of panmagic units of $\Z_n$ can be similarly counted. By Corollary \ref{AfPmag}, they are in $1$-$1$ correspondence with triples of consecutive units, and Schemmel generalized Euler's totient to $k$-totient $\pfi_k(n)$ that counts $k$-tuples of consecutive units, see \cite{Lehm}. It can be calculated analogously to  \eqref{totient}: 
\begin{equation}\label{ktotient}
\pfi_k(n)=n\left(1-\frac{k}{p_1}\right)\cdots\left(1-\frac{k}{p_m}\right)
\end{equation}
when all $p_i\geq k$, and $0$ otherwise. The number of panmagic units is thereby $\pfi_3(n)$, and we see again that there are none in non-Polya dimensions. 

Are all panmagic permutations affine? This question received much attention in the modular $n$-queens literature, and the answer, in general, is negative. They are for $n=5,7,11$, but in all Polya dimensions $n\geq13$ there exist non-affine panmagic permutations. First examples of them were constructed only in 1975 by Bruen and Dixon, and their nature and properties are still poorly understood \cite{BS2}. In this paper, we will restrict our attention to affine panmagic permutations only.  

\section{Panmagic-preserving subgroups}\label{PMagPres}

We plan to apply the Post coset theorem to construct $N$-ary subgroups of affine panmagic permutations in all Polya dimensions. The bigger they are the more interesting they are, and, being cosets, they have the size equal to the size of their Post associates. Moreover, for all of their elements to be panmagic the elements of the Post associate must preserve panmagic when multiplied by them. Thus, we would like to find the largest subgroup of $S_n$ whose elements preserve panmagic when multiplied by affine panmagic permutations. Since panmagic of $\pi_{a,b}$ depends only on its slope $a$, this means that we are looking for the largest subgroup of $\+{\Z}_n$ whose elements when multiplied by a panmagic unit return another panmagic unit. 

By Corollary \ref{AfPmag}, a unit $a$ is panmagic if and only if $a\pm1$ are also units, or, equivalently, if and only if $a^2-1=(a-1)(a+1)$ is a unit. But $x$ is a unit modulo $n$ if and only if it shares no prime factors with $n$, so the panmagic condition reduces to $a^2\not\equiv1\pmod{p}$ for every prime $p\mid n$. This suggests applying the Chinese Remainder theorem (CRT) \cite{IrRos,Rosen} to characterize preservation of panmagic. 
\begin{theorem}\label{PmagStab} Let $u\in\+{\Z}_n$ with $2,3\nmid n$. Then $ua$ is panmagic for every panmagic $a\in\+{\Z}_n$ if and only if $u^2\equiv1\pmod{p}$ for every prime $p\mid n$.
\end{theorem}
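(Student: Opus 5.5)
Both directions rest on the reformulation just before the statement: a unit $a$ is panmagic if and only if $a^2\not\equiv1\pmod p$ for every prime $p\mid n$. The condition is local, one prime at a time, so I will argue prime by prime and use the CRT to assemble residues.

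\emph{Sufficiency.} Suppose $u^2\equiv1\pmod p$ for every prime $p\mid n$, and let $a$ be panmagic. Then $ua$ is a unit as a product of units. For each prime $p\mid n$ we have $(ua)^2=u^2a^2\equiv a^2\not\equiv1\pmod p$. Hence $ua$ is panmagic.

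\emph{Necessity.} I argue by contraposition. Suppose some prime $p\mid n$ has $u^2\not\equiv1\pmod p$. I will construct a panmagic $a$ with $(ua)^2\equiv1\pmod p$, which makes $ua$ not panmagic. Write $n=p^{\a}m$ with $p\nmid m$. By the CRT, $\+{\Z}_n\simeq\+{\Z}_{p^\a}\times\+{\Z}_m$, so the residue of $a$ can be prescribed independently modulo $p^\a$ and modulo $m$.

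\begin{itemize}
\item Modulo $m$: take $a\equiv2\pmod m$. Every prime $q\mid m$ satisfies $q\geq5$ because $2,3\nmid n$, so $2^2-1=3\not\equiv0\pmod q$.
\item Modulo $p^\a$: take $a\equiv u^{-1}\pmod{p^\a}$. Then $ua\equiv1\pmod p$, so $(ua)^2\equiv1\pmod p$. Also $a^2\equiv u^{-2}\not\equiv1\pmod p$, because $u^2\not\equiv1\pmod p$.
\end{itemize}

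With these choices $a$ is a unit and $a^2\not\equiv1$ modulo every prime dividing $n$, so $a$ is panmagic. But $ua\equiv1\pmod p$, so $ua-1$ is not a unit and $ua$ is not panmagic. This contradicts the hypothesis, and necessity follows.

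The only delicate point is making $a$ panmagic at the primes other than $p$. The hypothesis $2,3\nmid n$ is exactly what guarantees that the choice $a\equiv2$ works there, and the CRT is what lets the two prescriptions be combined.
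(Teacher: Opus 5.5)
Your proof is correct and follows essentially the same route as the paper. Sufficiency uses $(ua)^2=u^2a^2\equiv a^2$ modulo each prime $p\mid n$, and necessity uses a CRT construction with $a\equiv u^{-1}$ modulo $p^{\a}$ at a bad prime and $a\equiv 2$ elsewhere. The only difference is that you prescribe $u^{-1}$ at a single prime with $u^2\not\equiv1$, whereas the paper does so at every prime in the set $I$ of such primes; this is a harmless simplification, since one prime suffices to make $ua-1$ a non-unit.
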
 
\begin{proof} If $u^2\equiv1\pmod{p}$ for every prime $p\mid n$ then the characteristic property of panmagic units, $a^2\not\equiv1\pmod{p}$, is trivially retained by $ua$. On the other hand, if $u^2\not\equiv1\pmod{p_i}$ for $i$ in some non-empty subset $I$ that indexes   prime factors of $n$ then set $a_i\equiv u^{-1}\pmod{p_i^{\a_i}}$ for $i\in I$ and $a_i\equiv2\pmod{p_i^{\a_i}}$ for $i\not\in I$.  Let $a\in\+{\Z}_n$ be the solution to $x\equiv a_i\pmod{p_i^{\a_i}}$ given by CRT. By construction, $a^2\not\equiv1\pmod{p_i}$ for all $i$ since $(u^{-1})^2\equiv(u^2)^{-1}\not\equiv1\pmod{p_i^{\a_i}}$ for $i\in I$ and $2^2\not\equiv1\pmod{p_i^{\a_i}}$ for any $i$ since $3\nmid n$, so $a$ is panmagic. But $ua\equiv ua_i\equiv1\pmod{p_i^{\a_i}}$ for $i\in I$, and, all the more, $ua\equiv1\pmod{p_i}$, so $ua$ is not panmagic. Thus, $u$ does not preserve panmagic.
\end{proof}
\noindent The panmagic preserving condition of Theorem \ref{PmagStab} can be neatly packaged by introducing the radical \cite{Nath}. For a positive integer $n$ with the prime factorization $n=p_1^{\a_1}\dots p_m^{\a_m}$, its {\it radical} (or square-free kernel) is the number $\rad(n):=p_1\cdots p_m$. An integer $n$ is called {\it square-free} when it is not divisible by any perfect square, i.e. when $\rad(n)=n$. By CRT, $x\equiv1\pmod{p}$ for every prime $p\mid n$ if and only if $x\equiv1\pmod[\big]{\!\rad(n)}$, and we get the following corollary.
\begin{corollary}\label{PanRad} Let $u\in\+{\Z}_n$ with $2,3\nmid n$. Then $u$ preserves panmagic, i.e. $ua$ is panmagic for every panmagic $a\in\+{\Z}_n$, if and only if $u^2\equiv1\pmod[\big]{\!\rad(n)}$.
\end{corollary}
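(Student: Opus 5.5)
The plan is to derive Corollary \ref{PanRad} directly from Theorem \ref{PmagStab}. The only thing to show is that the condition there, $u^2\equiv1\pmod{p}$ for every prime $p\mid n$, is the same as the single congruence $u^2\equiv1\pmod[\big]{\!\rad(n)}$. Once that is done, the two characterizations of panmagic-preserving units coincide and there is nothing more to prove.

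First, let $n=p_1^{\a_1}\cdots p_m^{\a_m}$. The primes $p_1,\dots,p_m$ are distinct, hence pairwise coprime, and their product is $\rad(n)$. Apply this with $x=u^2$. If $u^2\equiv1\pmod{\rad(n)}$, then $\rad(n)\mid u^2-1$, so each $p_i\mid u^2-1$, because $p_i\mid\rad(n)$. Conversely, if every $p_i$ divides $u^2-1$, then their product divides $u^2-1$, since they are pairwise coprime. This is the CRT uniqueness statement, or simply unique factorization.

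Second, substitute this equivalence into Theorem \ref{PmagStab}. It applies under the same hypotheses $u\in\+{\Z}_n$ and $2,3\nmid n$, and it yields the corollary.

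No real obstacle arises. The one point to state carefully is that the product of pairwise coprime divisors of an integer divides it. This is why the radical, not $n$ itself, is the right modulus: the condition on $u^2$ involves only the primes dividing $n$, not their multiplicities $\a_i$.
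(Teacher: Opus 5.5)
Your proposal is correct and follows essentially the same route as the paper. The paper likewise derives the corollary from Theorem~\ref{PmagStab} by observing, via CRT, that $u^2\equiv 1$ modulo every prime $p\mid n$ holds exactly when $u^2\equiv 1$ modulo $\rad(n)$.
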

\noindent In other words, panmagic-preserving units are closely related to modular square roots of unity and there can be many more of them than just $\pm1$.
\begin{definition}
We denote $\sqrt{1}_{\,n}$ the group of square roots of unity modulo $n$, and $\sqrt{1}_{n}^{\,\#}$ the group of lifted square roots of unity modulo $n$:
\begin{align*}
\sqrt{1}_{n}&:=\{a\in\Z_n^{\times}\mid a^2\equiv1\!\!\!\!\pmod{n}\}\\
\sqrt{1}_{\,n}^{\,\#}&:=\big\{a\in\Z_n^{\times}\mid a^2\equiv1\!\!\!\!\pmod[\big]{\!\rad(n)}\big\}.
\end{align*}
\end{definition}
\noindent The ``lifted" in the name comes from thinking of elements of $\sqrt{1}_{\,n}^{\,\#}$ as pre-images or `lifts' of elements of $\sqrt{1}_{\rad(n)}$ under the map $\Z_n\to\Z_{\rad(n)}$ that reduces residues modulo $\rad(n)$. 

In general, $\{\pm1\}\subseteq\sqrt{1}_{n}\subseteq\sqrt{1}_{\,n}^{\,\#}$ and both inclusions are proper. The first one becomes equality for prime powers, and the second one for square-free numbers. All three are equal only for primes. The group $\sqrt{1}_{n}$ is studied in \cite{OOO}. For odd $n$, it has a pair of elements per each distinct prime factor of $n$ as one can see from CRT. If we use the standard notation $\omega(n)$ for the number of distinct prime factors of $n$ \cite{Rosen} then $|\sqrt{1}_{n}|=2^{\omega(n)}$. And since each residue from $\Z_{\rad(n)}$ has $\frac{n}{\rad(n)}$ lifts in $\Z_n$ we get $\big|\sqrt{1}_{\,n}^{\,\#}\big|=\frac{n}{\rad(n)}\,2^{\omega(n)}$. The gain in size is substantial already for prime powers: $\big|\sqrt{1}_{\,p^\a}^{\,\#}\big|=p^{\a-1}|\sqrt{1}_{p^\a}|$. For example, $\sqrt{1}_{25}=\{1,24\}$, while $\sqrt{1}_{\,25}^{\,\#}=\{1,4,6,9,11,14,16,19,21,24\}$.

The dihedral group $D_n$ can be described as the subgroup of affine permutations with slopes $\pm1$. When we allow square roots and lifted square roots of unity as slopes instead, we get the extensions of it defined below. 
\begin{definition}
The multidihedral and the lifted dihedral groups are defined as
\begin{align*}
\mD_n&:=\{\sigma\in \Aff(\Z_n)\mid \s(i)=ui+b, u\in\sqrt{1}_{n},\,b\in\Z_n\}\\
\sD_n&:=\big\{\sigma\in \Aff(\Z_n)\mid \s(i)=ui+b, u\in\sqrt{1}_{n}^{\,\#},\,b\in\Z_n\big\}.
\end{align*}
\end{definition}
\noindent There is no standard notation or name for $\mD_n$, but  ``multidihedral" seems fitting (``multidi" for multiple pairs of roots) and is not taken.

We will also refer to elements of $\sqrt{1}_{n}$ and $\sqrt{1}_{\,n}^{\,\#}$ as multidihedral and lifted dihedral units, respectively. For prime powers, we have the {\it lifted dihedral/panmagic dichotomy}: every unit or affine permutation is either lifted dihedral or panmagic. Indeed, as there is a single prime, either $a^2\equiv1\pmod{p}$, and $a$ is lifted dihedral, or not, and $a$ is panmagic. For primes, ``lifted" can be dropped since $\sqrt{1}_{p}^{\,\#}=\{\pm1\}$ and $\sD_p=D_p$. But for composite $n$ with distinct prime factors, there are intermediate $a\in\+{\Z}_n$ with $a^2\equiv1\pmod{p}$ for some, but not all, prime factors $p$ of $n$.

Clearly, $|\mD_n|=|\sqrt{1}_{n}|\,n=2^{\omega(n)}n$ and  
$$
|\sD_n|=\big|\sqrt{1}_{n}^{\,\#}\big|\,n=\frac{n}{\rad(n)}\,2^{\omega(n)}n=\frac{n}{\rad(n)}\,|\mD_n|.
$$
In particular, $\sD_n$ can be much larger than $\mD_n$, and so will be its cosets. 

The lifted dihedral group $\sD_n$ is the largest subgroup of $S_n$ that preserves affine panmagic of permutations. Hence, it is the group that we want as the Post associate. But the simpler and smaller $\mD_n$ will also prove useful when considering panmagic cycle types.

\section{$\boldsymbol{\Aff(\Z_n)}$ as the Post cover}

For the Post coset theorem to apply with $\mD_n$ or $\sD_n$ as the Post associate and $\Aff(\Z_n)$ as the Post cover, we need them to be normal in $\Aff(\Z_n)$ and the quotient groups to be cyclic. In this section, we establish when this is the case and construct the corresponding $N$-ary subgroups.

\subsection{Slope-squaring homomorphisms}\label{SlopeSq}

A simple way to prove normality of subgroups and cyclicity of quotients by them is to use the homomorphism theorem \cite{Gallian}. If $f:G\to\Gamma$ is a homomorphism between two groups, with the kernel $\Ker f$ and the range $\Ran f$, then $\Ker f$ is a normal subgroup of $G$ and $G/\Ker f\simeq \Ran f$.

The definitions of $\mD_n$ and $\sD_n$ suggest what our homomorphisms should be. Indeed, consider the maps  $f(\pi_{a,b}):=a^2$ and $f_{\rad}(\pi_{a,b}):=a^2\!\pmod[\big]{\!\rad(n)}$. Squaring is a homomorphism from $\Z_n^{\times}$ to itself for any $n$, and so is the reduction $\+{\Z}_n\to\+{\Z}_{\rad(n)}$. That the slope-taking map is also a homomorphism is easy to see using the matrix representation of $\Aff(\Z_n)$: 
$$
\begin{pmatrix}
    a & b\\
    0 & 1\end{pmatrix}
    \begin{pmatrix}
        a' & b'\\
        0 & 1
    \end{pmatrix}=
    \begin{pmatrix}
            aa' & ab'+b\\
            0 & 1
        \end{pmatrix},
$$ 
so the slope of the product is the product of the slopes. Thus, both $f$ and $f_{\rad}$ are homomorphisms and, by inspection, their kernels are  $\mD_n$ and $\sD_n$, respectively. 
\begin{definition}
The map $f:\Aff(\Z_n)\to \Z_n^{\times}$ given by $f(\pi_{a,b}):=a^2$ will be called the slope-squaring homomorphism. Its range, the group of quadratic residues modulo $n$, will be denoted $QR_n:=\{a^2\mid a\in\Z_n^{\times}\}$. The map $f_{\rad}(\pi_{a,b}):=a^2\!\pmod[\big]{\!\rad(n)}$ will be called the reduced slope-squaring homomorphism.  \end{definition}
\noindent By the above definitions, $\Ker f=\mD_n$, $\Ran f=QR_n$, $\Ker f_{\rad}=\sD_n$ and $\Ran f_{\rad}=QR_{\rad(n)}$. Applying the homomorphism theorem, we obtain the following.
\begin{corollary} $\mD_n$ and $\sD_n$ are normal subgroups of $\Aff(\Z_n)$ and
\begin{align}\label{GAhomomorphism}
\Aff(\Z_n)\big/\,\mD_n&\simeq\,\Z_n^{\times}\big/\sqrt{1}_{n}\simeq QR_n\,;\notag\\
\Aff(\Z_n)\big/\,\sD_n&\simeq\,\Z_n^{\times}\big/\sqrt{1}_{n}^{\,\#}\simeq\Z_{\rad(n)}^{\times}\big/\sqrt{1}_{\rad(n)}\simeq QR_{\rad(n)}.
\end{align}
\end{corollary}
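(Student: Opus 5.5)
The plan is to apply the homomorphism theorem to $f$ and $f_{\rad}$, and then to recognise each intermediate quotient through the first isomorphism theorem for the relevant map on units. Since $\Ker f=\mD_n$ and $\Ker f_{\rad}=\sD_n$, both groups are normal in $\Aff(\Z_n)$. We also get $\Aff(\Z_n)/\mD_n\simeq\Ran f=QR_n$ and $\Aff(\Z_n)/\sD_n\simeq\Ran f_{\rad}$. What remains is to justify the middle isomorphisms in \eqref{GAhomomorphism} and to identify $\Ran f_{\rad}$ with $QR_{\rad(n)}$.

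For the first line, I would factor $f$ as the slope map $s:\pi_{a,b}\mapsto a$ followed by squaring $q:\+{\Z}_n\to\+{\Z}_n$. The slope map $s$ is a surjective homomorphism, since $a$ ranges over all units with $b=0$. The squaring map $q$ is a homomorphism because $\+{\Z}_n$ is abelian, its kernel is $\sqrt{1}_{n}$, and its range is $QR_n$. Therefore $\+{\Z}_n/\sqrt{1}_n\simeq QR_n$. Moreover, $s$ induces $\Aff(\Z_n)/\mD_n\simeq\+{\Z}_n/\sqrt{1}_n$, because $\mD_n=s^{-1}(\sqrt{1}_n)$ and $s$ is surjective (correspondence theorem).

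For the second line I would argue in the same way with $q_{\rad}:\+{\Z}_n\to\+{\Z}_{\rad(n)}$, $a\mapsto a^2\bmod\rad(n)$, whose kernel is $\sqrt{1}_n^{\,\#}$ by definition. This gives $\Aff(\Z_n)/\sD_n\simeq\+{\Z}_n/\sqrt{1}_n^{\,\#}\simeq\Ran q_{\rad}$. The key step, and the main obstacle, is showing that the reduction $r:\+{\Z}_n\to\+{\Z}_{\rad(n)}$ is surjective. Given this, $\Ran q_{\rad}$ equals the set of squares of all units of $\Z_{\rad(n)}$, which is $QR_{\rad(n)}$. To prove surjectivity, take a unit $c$ modulo $\rad(n)$. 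Every lift $c+k\rad(n)$ is coprime to each prime $p\mid n$, since $p\mid\rad(n)$ and $p\nmid c$, so every lift is a unit modulo $n$. Alternatively, one can invoke CRT prime by prime.

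With $r$ surjective, $q_{\rad}=\mathrm{sq}\circ r$, where $\mathrm{sq}$ is squaring on $\+{\Z}_{\rad(n)}$, and $r^{-1}(\sqrt{1}_{\rad(n)})=\sqrt{1}_n^{\,\#}$. The correspondence theorem then gives $\+{\Z}_n/\sqrt{1}_n^{\,\#}\simeq\+{\Z}_{\rad(n)}/\sqrt{1}_{\rad(n)}$. Finally, the squaring argument applied to $\rad(n)$ gives $\+{\Z}_{\rad(n)}/\sqrt{1}_{\rad(n)}\simeq QR_{\rad(n)}$, which completes the chain.
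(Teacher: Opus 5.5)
Your proposal is correct and follows the paper's route: the paper applies the homomorphism theorem to $f$ and $f_{\rad}$, whose kernels are $\mD_n$ and $\sD_n$ and whose ranges are $QR_n$ and $QR_{\rad(n)}$. Your factorization through the slope map and the correspondence theorem, together with your check that the reduction $\+{\Z}_n\to\+{\Z}_{\rad(n)}$ is surjective, spell out what the paper leaves to inspection, namely the middle isomorphisms and the equality $\Ran f_{\rad}=QR_{\rad(n)}$.
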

\noindent From the first isomorphism, $|QR_n|=\frac{\pfi(n)}{2^{\omega(n)}}$, and from the second, $|QR_{\rad(n)}|=\frac{\rad(n)}{n}\frac{\pfi(n)}{2^{\omega(n)}}$. For primes, $QR_p\simeq\Z_{\frac{p-1}{2}}$ because quadratic residues are exactly the even powers of a primitive root. For prime powers, $QR_{\rad(p^\a)}=QR_p$ and \eqref{GAhomomorphism} reveals large panmagic $N$-ary subgroups already at the level of units. As an amusing illustration, the ten panmagic units of $\+{\Z}_{25}$ form its ternary subgroup, i.e., $$2\,\sqrt{1}_{25}^{\,\#}=\{2,3,7,8,12,13,17,18,22,23\}$$ is closed under ternary multiplication and taking modular inverses. 

The remaining obstruction to applying the Post coset theorem is that for composite $n$ that are not prime powers $QR_n$ may not be cyclic. When it is not, $\Aff(\Z_n)$ cannot be the Post cover of $N$-ary cosets for any $N$. The simplest way out is to restrict to $n$ with $QR_n$ or $QR_{\rad(n)}$ cyclic, and we follow it in the next section. A more refined approach appears in Section \ref{StrictNary}.

\subsection{Square-cyclic numbers}\label{SqCyc}

While the absence of primitive roots in non-prime power Polya dimensions means that $\+{\Z}_{n}$ is not cyclic, this is not always the case for $QR_n$.
\begin{definition} We will call a positive integer $n$ square-cyclic when $QR_n$ is cyclic, and call $r\in\+{\Z}_n$ a square-primitive root of $n$ when $r^2$ generates $QR_n$.
\end{definition}
Which integers are square-cyclic? Certainly, odd primes and their powers are because they have  primitive roots and squares of primitive roots generate $QR_n$. To give a general answer, we need to understand the structure of $QR_n$. Recall that given the prime factorization of $n={p_1}^{\a_1}p_2^{\a_2}\dots p_m^{\a_m}$, we get the primary decomposition 
\begin{equation}\label{Zndecomp}
\+{\Z}_n\simeq\+{\Z}_{{p_1}^{\!\a_1}}\times\cdots\times\+{\Z}_{{p_m}^{\!\!\a_m}},
\end{equation}
with the isomorphism given by taking $x\in\+{\Z}_n$ modulo each ${p_i}^{\!\a_i}$ \cite{IrRos,Nath}. The next result is folklore, but goes back, essentially, to Gauss. Recall that $\pfi(n)$ denotes Euler's totient. 
\begin{lemma}\label{QRndec} Let $n=p_1^{\a_1}p_2^{\a_2}\dots p_m^{\a_m}$ be the prime factorization of an odd integer $n$. Then 
\begin{equation}\label{QRndecomp} 
QR_n\simeq QR_{p_1^{\!\a_1}}\!\times\cdots\times QR_{{p_m}^{\!\!\!\a_m}},
\end{equation}
and $QR_n$ is cyclic if and only if $\frac{\pfi(p_i^{\a_i})}2$ are pairwise relatively prime. $QR_{\rad(n)}$ is cyclic if and only if $\frac{p_i-1}2$ are pairwise relatively prime.
\end{lemma}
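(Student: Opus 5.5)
The plan is to push the primary decomposition \eqref{Zndecomp} through the squaring map and then read off cyclicity from the structure of the factors. Let $\theta:\+{\Z}_n\to\prod_i\+{\Z}_{p_i^{\a_i}}$ be the CRT isomorphism $x\mapsto(x \bmod p_i^{\a_i})_i$. It is multiplicative, so $\theta(a^2)=(a_i^2)_i$, where $a_i$ is the residue of $a$ modulo $p_i^{\a_i}$. Hence $\theta(QR_n)\subseteq\prod_i QR_{p_i^{\a_i}}$.

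For the reverse inclusion, take any tuple $(b_i^2)_i$ with $b_i\in\+{\Z}_{p_i^{\a_i}}$. CRT gives $b\in\+{\Z}_n$ with $b\equiv b_i$ modulo each $p_i^{\a_i}$, and then $\theta(b^2)=(b_i^2)_i$. This establishes \eqref{QRndecomp}.

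Next we identify each factor. Since $n$ is odd, every $p_i^{\a_i}$ is an odd prime power and so has a primitive root $g$. Thus $\+{\Z}_{p_i^{\a_i}}\simeq\Z_{\pfi(p_i^{\a_i})}$, which has even order. Under this identification $QR_{p_i^{\a_i}}$ is the subgroup of even powers of $g$, i.e.\ $\langle g^2\rangle$, a cyclic group of order $\frac{\pfi(p_i^{\a_i})}2$. This is the same argument the paper already used for primes.

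So $QR_n\simeq\Z_{k_1}\times\cdots\times\Z_{k_m}$ with $k_i=\frac{\pfi(p_i^{\a_i})}2$. The cyclicity criterion is the standard fact that a finite product of cyclic groups is cyclic if and only if the orders are pairwise coprime. Sufficiency follows from CRT, $\Z_{k_1}\times\cdots\times\Z_{k_m}\simeq\Z_{k_1\cdots k_m}$. For necessity, suppose $d=\gcd(k_i,k_j)>1$ for some $i\neq j$. Then the exponent of the product is at most $\lcm(k_1,\dots,k_m)<k_1\cdots k_m$, the order of the group, so no element generates it. Equivalently, the product contains $\Z_d\times\Z_d$, which has $d^2$ elements of order dividing $d$, whereas a cyclic group has only $d$.

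For the last claim, apply the first part to $\rad(n)=p_1\cdots p_m$, which is odd and square-free. Each factor is then $QR_{p_i}$, of order $\frac{\pfi(p_i)}2=\frac{p_i-1}2$, and the criterion becomes pairwise coprimality of the $\frac{p_i-1}2$.

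The only step that needs any care is necessity in the product-of-cyclic-groups criterion, and the exponent argument above handles it. Everything else is CRT together with the existence of primitive roots modulo odd prime powers.
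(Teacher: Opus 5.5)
Your proposal is correct and follows the paper's proof. Like the paper, you restrict the CRT isomorphism to squares, get surjectivity from CRT, identify each $QR_{p_i^{\alpha_i}}$ with $\langle g^2\rangle$ of order $\varphi(p_i^{\alpha_i})/2$, and apply the product-of-cyclic-groups criterion. You go slightly further by proving that criterion rather than citing it, and by spelling out the $\rad(n)$ case that the paper leaves implicit.
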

\begin{proof} Restrict the isomorphism in \eqref{Zndecomp} to the quadratic residues $QR_n$. It is still injective and its range is contained in $QR_{{p_1}^{\!\a_1}}\!\times\cdots\times QR_{{p_m}^{\!\!\a_m}}$ because if $x\equiv a^2\pmod{n}$ then $x\equiv a^2\pmod{p_i^{\a_i}}$. It is also surjective by the Chinese Remainder theorem. Indeed, let $a_i^2\in\+{\Z}_{{p_i}^{\a_i}}$ be any quadratic residue for every $i=1,\dots,m$. Then the system of congruences $x\equiv a_i\pmod{p_i^{\a_i}}$ has a (unique) solution $x=a$ in $\+{\Z}_n$, and its square $a^2$ is a quadratic residue that gets mapped to $a_i^2$. Thus, $QR_n$ is isomorphic to the direct product of $QR_{{p_i}^{\a_i}}$. 

Since each $\Z_{{p_i}^{\a_i}}^{\times}$ is cyclic of order $\pfi({p_i}^{\a_i})$ its subgroup $QR_{{p_i}^{\a_i}}$ is cyclic of order $\frac{\varphi({p_i}^{\a_i})}{2}$ because half of the powers of a primitive root are even. It remains to note that a direct product of cyclic groups is cyclic if and only if the orders of its factors are pairwise relatively prime \cite{Gallian}.  
\end{proof}
It follows that there are plenty of square-cyclic numbers that are not prime powers, for example, $35=5\cdot7$, $275=5^2\cdot11$ or $385=5\cdot7\cdot11$. Moreover, even when $n$ does not generate a cyclic $QR_n$, its quotient $QR_{\rad(n)}$ can still be cyclic. Recall that $|QR_{\rad(n)}|=\frac{\rad(n)}{n}\frac{\pfi(n)}{2^{\omega(n)}}$, where $\omega(n)$ is the number of distinct prime factors of $n$. Our first result on panmagic $N$-ary groups concerns Polya dimensions with square-cyclic radicals. 
\begin{theorem}\label{comptheory} Let $n\geq5$ have a square-cyclic radical, $2,3\nmid n$, $r$ be a square-primitive root of $\rad(n)$, $N:=\frac{\rad(n)}{n}\frac{\pfi(n)}{2^{\omega(n)}}+1$ and $C:=\pi_r\sD_{n}$. Then $C^i=\pi_r^i\,\sD_{n}$  are $N$-ary subgroups consisting of panmagic permutations when $r^i$ is panmagic. Moreover, 
$$
\langle\pi_r,\sD_n\rangle=\cup_{i=1}^{N-1}C^i=\Aff(\Z_n),
$$
$\sD_n$ is its normal subgroup, and the quotient group is $\Aff(\Z_n)/\,\sD_n\simeq\Z_{N-1}$. 
\end{theorem}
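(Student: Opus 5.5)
The plan is to combine the reduced slope-squaring homomorphism $f_{\rad}$ with Theorem~\ref{PostConv}. By the Corollary in Section~\ref{SlopeSq}, $\sD_n=\Ker f_{\rad}$ is normal in $\Aff(\Z_n)$. Moreover, $\Aff(\Z_n)/\sD_n\simeq \Ran f_{\rad}=QR_{\rad(n)}$. By hypothesis, $QR_{\rad(n)}$ is cyclic, and its order is $|QR_{\rad(n)}|=\frac{\rad(n)}{n}\frac{\pfi(n)}{2^{\omega(n)}}=N-1$. So the quotient is $\Z_{N-1}$.

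Next we identify a generator of the quotient. By definition of a square-primitive root of $\rad(n)$, the residue $r^2 \pmod{\rad(n)}$ generates $QR_{\rad(n)}$. We need $r$ to give an element of $\+{\Z}_n$. If $r$ is only given modulo $\rad(n)$, we lift it by CRT to a unit modulo $n$; the reduction map $\+{\Z}_n\to\+{\Z}_{\rad(n)}$ is surjective, and this is the one point to address carefully. Then $f_{\rad}(\pi_r)=r^2\pmod{\rad(n)}$ generates $\Ran f_{\rad}$, so the coset $\pi_r\sD_n$ generates the cyclic quotient.

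Consequently the cosets $\pi_r^i\sD_n$ for $i=1,\dots,N-1$ exhaust $\Aff(\Z_n)$. Since $\pi_r^{N-1}\in\sD_n$, this gives $\langle\pi_r,\sD_n\rangle=\bigcup_{i=1}^{N-1}\pi_r^i\sD_n=\Aff(\Z_n)$. Normality of $\sD_n$ gives $C^i=(\pi_r\sD_n)^i=\pi_r^i\sD_n^i=\pi_r^i\sD_n$, using $\sD_n\pi_r=\pi_r\sD_n$ repeatedly. By Theorem~\ref{PostConv}, applied with $H=\sD_n$, $\H=\Aff(\Z_n)$ and $G=S_n$, every such coset is an $N$-ary subgroup.

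Finally, we check panmagic. Every element of $\pi_r^i\sD_n=\pi_{r^i}\sD_n$ is affine, and its slope is $r^iu$ with $u\in\sqrt{1}_{\,n}^{\,\#}$, since slopes multiply. If $r^i$ is panmagic, Corollary~\ref{PanRad} shows that $r^iu$ is panmagic, because $u^2\equiv1\pmod{\rad(n)}$. Corollary~\ref{AfPmag} then says each element of $C^i$ is a panmagic permutation.

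The only real obstacle is bookkeeping. We must make sure $r$ is interpreted as a unit of $\Z_n$ lifting the square-primitive root. We must also confirm that the order count of $QR_{\rad(n)}$ from the earlier corollary matches $N-1$. Everything else is a direct citation.
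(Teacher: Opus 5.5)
Your proposal is correct and follows the paper's intended argument. The paper leaves this proof to the reader as a direct application of Theorem~\ref{PostConv} to $\Aff(\Z_n)/\sD_n\simeq QR_{\rad(n)}$, which comes from the reduced slope-squaring homomorphism, with panmagic of $\pi_{r^i}\sD_n$ following from Corollary~\ref{PanRad}; your write-up fills in exactly those details. Your lifting concern is even simpler than you make it: any integer coprime to $\rad(n)$ is coprime to $n$, so no CRT is needed.
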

\noindent The proof amounts to a direct application of the Post coset theorem to \eqref{GAhomomorphism}, so we leave details to the reader. Theorem \ref{comptheory} does not claim that $C^i$ are panmagic for all $1\leq i\leq N-2$, nor does it say explicitly which of them are. For prime powers, all $C^i$, other than $\sD_n$ itself, are panmagic due to the lifted dihedral/panmagic dichotomy. In general, we have to check each $i$ individually, but $C=\pi_r\sD_{n}$ itself, at least, is always panmagic. This is because if $a^i$ is panmagic for some $i$ then so is $a$ itself. Indeed, 
\begin{equation}\label{a^2k-1}
(a^i)^2-1=(a^2)^i-1=(a^2-1)(a^{2(i-1)}+\cdots+a^2+1),
\end{equation}
so if $(a^i)^2-1$ is invertible then so is $a^2-1$. If $r$ is a square-primitive root then the powers of $\pi_r\sD_{n}$ exhaust all cosets of $\sD_{n}$. Some of them must be panmagic when $n$ is Polya. As all of them are of the form $\pi_{r^i}\sD_{n}$, some powers $r^i$ must be panmagic, and hence $r$ is itself panmagic. 

\section{Strictly $N$-ary panmagic subgroups}\label{StrictNary}

Our first attempt to apply the Post coset theorem to construction of panmagic $N$-ary subgroups in Theorem \ref{comptheory} is somewhat unsatisfactory. It does not apply to all $n$ and does not distinguish panmagic and non-panmagic cosets. Upon reflection, the root of the problem is in insisting on $\Aff(\Z_n)$ as the Post cover. In this section, we pursue an alternative, adjusting the Post cover individually to each coset. Aside from fixing the above two problems, it will give us strict arities of individual $N$-ary subgroups rather than an umbrella value for all of them. 

When the coset is $\pi_a\sD_n$ and $a$ is panmagic, the trick is to restrict the reduced slope-squaring homomorphism from all of $\Aff(\Z_n)$ to its subgroup $\langle\pi_a,\sD_n\rangle$. It can always serve as the Post cover because its quotient always happens to be cyclic, unlike $QR_{\rad(n)}$. Analogous result also holds for generally smaller cosets of $\mD_n$. Recall that the {\it multiplicative order} $\ord_n(x)$ of $x\in\Z_n^{\times}$ is the smallest positive integer $i$ such that $x^i\equiv1\pmod{n}$.
\begin{theorem}\label{StrictLDnarity}\label{compstrictarity} Let  $a\in\Z_n^{\times}$ be panmagic, and $N:=\ord_{\rad(n)}(a^2)+1$ ($N:=\ord_{n}(a^2)+1$). Then $C:=\pi_a\sD_n$ ($C:=\pi_a\mD_n$) is a strictly $N$-ary subgroup of $S_n$ consisting of panmagic permutations with the Post associate $\sD_n$ ($\mD_n$) and the Post cover $\langle\pi_a,\sD_n\rangle$ ($\langle\pi_a,\mD_n\rangle$). 
\end{theorem}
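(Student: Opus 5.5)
We apply Theorem~\ref{PostConv} with $\H:=\langle\pi_a,\sD_n\rangle$ and $H:=\sD_n$; the multidihedral case is identical with $\mD_n$, $f$ and $\ord_n$ in place of $\sD_n$, $f_{\rad}$ and $\ord_{\rad(n)}$. By the Corollary to the slope-squaring construction, $\sD_n$ is normal in $\Aff(\Z_n)$, hence normal in the intermediate group $\H$. Restrict the reduced slope-squaring homomorphism $f_{\rad}$ to $\H$. Its kernel is $\H\cap\sD_n=\sD_n$. Its range is generated by the images of the generators. These images are $f_{\rad}(\pi_a)=a^2\pmod{\rad(n)}$ and the trivial images of $\sD_n$, so the range is the cyclic group $\langle a^2\rangle\subseteq QR_{\rad(n)}$. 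Its order is $\ord_{\rad(n)}(a^2)=N-1$. The homomorphism theorem then gives $\H/\sD_n\simeq\Z_{N-1}$, and Theorem~\ref{PostConv} shows that the coset $C=\pi_a\sD_n$ is an $N$-ary subgroup.

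Next we identify the Post associate and the Post cover. Every element of $\H$ has the form $\pi_a^k\s=\pi_{a^k}\s$ with $\s\in\sD_n$, because $\sD_n$ is normal. Hence $\langle C\rangle=\H$, since $C$ contains $\pi_a$ and $\pi_a^{-1}C=\sD_n\subseteq\langle C\rangle$. Also $C^{N-1}=\pi_a^{N-1}\sD_n$. By normality, $C^{N-1}$ is the set of elements of $\H$ whose $f_{\rad}$-image is $a^{2(N-1)}=1$, and this set is $\sD_n$.

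For panmagic, every element of $C$ is $\pi_a\pi_{u,b}=\pi_{au,ab}$ with $u\in\sqrt{1}_n^{\,\#}$. Its slope $au$ is panmagic by Corollary~\ref{PanRad}, since $a$ is panmagic. Corollary~\ref{AfPmag} then makes the permutation itself panmagic.

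The step needing the most care is strictness. Suppose $C$ is closed under $N'$-ary products for some $2\le N'<N$. Then $C^{N'}\subseteq C$. Since $C^{N'}=\pi_a^{N'}\sD_n$ is a full coset of $\sD_n$, this forces $\pi_a^{N'}\sD_n=\pi_a\sD_n$, that is, $\pi_a^{N'-1}\in\sD_n$. This means $(a^2)^{N'-1}\equiv1\pmod{\rad(n)}$, which contradicts the minimality of $\ord_{\rad(n)}(a^2)=N-1$. It also agrees with the paper's remark that the strict arity is $\ord(C)+1$ in $\H/H$. No separate skew-element check is needed for the lower arities, because failure of product closure already rules them out. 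Finally, in the multidihedral version, the panmagic step uses $\mD_n\subseteq\sD_n$, so slopes $u\in\sqrt{1}_n$ still preserve panmagic.
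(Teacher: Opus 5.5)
Your proof is correct and follows essentially the same route as the paper. Both restrict $f_{\rad}$ to $\langle\pi_a,\sD_n\rangle$, identify the kernel $\sD_n$ and the cyclic image $\langle a^2\rangle$ of order $\ord_{\rad(n)}(a^2)$, apply Theorem~\ref{PostConv}, and derive strictness from the minimality of that order; you additionally verify explicitly the panmagic property and the identifications $C^{N-1}=\sD_n$ and $\langle C\rangle=\langle\pi_a,\sD_n\rangle$, which the paper leaves implicit in ``most of the claim follows.''
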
 
\begin{proof}
We give the proof for $\pi_a\sD_n$, the proof for $\pi_a\mD_n$ is analogous. Let $f_{\rad}:\langle\pi_a,\sD_n\rangle\to\Z_{\rad(n)}^{\times}$ be the reduced slope-squaring homomorphism restricted to $\langle\pi_a,\sD_n\rangle$. We still have $\Ker f_{\rad}=\sD_n$, but $\Ran f$ now only consists of squares of slopes of permutations from $\langle\pi_a,\sD_n\rangle$. Since $\langle \pi_a,\sD_n\rangle$ is the set of products of powers of $\pi_a$ and elements of $\sD_n$, their slopes are of the form $\e a^i$ for $\e\in\sqrt{1}_{\,n}^{\,\#}$. Therefore, their squares are $(\e a^i)^2=\e^2(a^2)^i\equiv(a^2)^i\!\!\pmod[\big]{\!\rad(n)}$ because $\e^2\equiv1\!\!\pmod[\big]{\!\rad(n)}$ by definition of $\sqrt{1}_{\,n}^{\,\#}$. Thus, $\Ran f_{\rad}\subseteq\langle a^2\rangle_{\Z_{\rad(n)}^{\times}}$\!, and we have equality because $f_{\rad}(\pi_a^i)\equiv(a^2)^i\!\!\pmod[\big]{\!\rad(n)}$.

The group $\langle a^2\rangle_{\Z_{\rad(n)}^{\times}}$\! is cyclic of size $\ord_{\rad(n)}(a^2)=N-1$, so most of the claim follows from the homomorphism theorem and Theorem \ref{PostConv}. That the arity $N$ is strict follows from the fact that $\ord_{\rad(n)}(a^2)$ is the smallest $i$ such that $\pi_a^i\in\sD_n$, so $\pi_a^{i+1}\not\in\pi_a\sD_n$ for positive $i+1<N$. 
\end{proof}
We can use Theorem \ref{compstrictarity} to find dimensions $n$ that admit strictly $N$-ary panmagic cosets of $\sD_n$. The ternary case $N=3$ is relatively straightforward.
\begin{corollary}\label{PanTerComp}
Let $2,3\nmid n$. Then, strictly ternary affine panmagic cosets of $\sD_n$ exist if and only if $-1$ is a quadratic residue modulo $\rad(n)$. Moreover, if $a^2=-1$ then $\pi_a\sD_n$ is such a ternary coset. Equivalently, such cosets exist if and only if all prime factors of $n$ are of the form $4k+1$ with some integers $k$. 
\end{corollary}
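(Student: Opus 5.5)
By Theorem \ref{compstrictarity}, a strictly ternary panmagic coset of $\sD_n$ is one of the form $\pi_a\sD_n$ with $a$ panmagic and $\ord_{\rad(n)}(a^2)=2$. (Every affine coset of $\sD_n$ is $\pi_a\sD_n$, and panmagic depends only on the coset.) So the plan is to characterize when such an $a$ exists. The condition $\ord_{\rad(n)}(a^2)=2$ means $a^4\equiv1$ but $a^2\not\equiv1\pmod{\rad(n)}$. Panmagic of $a$ is the stronger requirement $a^2\not\equiv1\pmod p$ for every prime $p\mid n$. Combined with $a^4\equiv1\pmod p$, that forces $a^2\equiv-1\pmod p$ for each $p$, since $\Z_p$ is a field and $x^2=1$ has only the roots $\pm1$. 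By CRT this is equivalent to $a^2\equiv-1\pmod{\rad(n)}$. Hence the panmagic $a$ with $\ord_{\rad(n)}(a^2)=2$ are exactly the units with $a^2\equiv-1\pmod{\rad(n)}$.

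For the forward direction, such an $a$ exists only if $-1$ is a quadratic residue modulo $\rad(n)$, after reducing $a$ modulo $\rad(n)$.

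For the converse, suppose $-1\equiv b^2\pmod{\rad(n)}$. Take any lift $a\in\Z_n$ of $b$. This $a$ is a unit, because it is coprime to every prime dividing $n$. We have $a^2\equiv-1\pmod p$ for all $p\mid n$. Then $a^2-1\equiv-2\not\equiv0$ and $a^2+1\equiv 0$, so we need only $a^2-1$ to be a unit for panmagic, which holds since $p\neq2$. Also $\ord_{\rad(n)}(a^2)=2$ because $-1\not\equiv1$ for odd $\rad(n)>1$. Theorem \ref{compstrictarity} then gives that $\pi_a\sD_n$ is strictly ternary. For the ``moreover'' clause, if $a^2=-1$ in $\Z_n$, then also $a^2\equiv-1\pmod{\rad(n)}$, and the same argument applies.

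For the last equivalence, by CRT $-1$ is a square modulo $\rad(n)=p_1\cdots p_m$ if and only if it is a square modulo each $p_i$. By the first supplement to quadratic reciprocity (Euler's criterion), this holds iff $p_i\equiv1\pmod 4$. The only step needing care is the field argument forcing $a^2\equiv-1$ prime by prime. Everything else is routine.
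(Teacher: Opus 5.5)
Your proposal is correct and follows the paper's proof: reduce via Theorem~\ref{compstrictarity} to $\ord_{\rad(n)}(a^2)=2$, factor $a^4-1=(a^2-1)(a^2+1)$ and use panmagic to discard the factor $a^2-1$; conversely, check that $a^2\equiv-1$ makes $a^2-1\equiv-2$ a unit and gives order exactly $2$. The differences are cosmetic. You cancel $a^2-1$ prime by prime in the fields $\Z_p$ rather than as a unit in $\Z_{\rad(n)}$. You also explicitly lift a square root of $-1$ from $\Z_{\rad(n)}$ to a unit of $\Z_n$, which the paper leaves implicit.
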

\begin{proof}
If $\pi_a\sD_n$ is strictly ternary then, by Theorem \ref{compstrictarity},
$\ord_{\rad(n)}(a^2)=2$, i.e. $a^4=1$. But then $a^4-1=(a^2-1)(a^2+1)=0$. Since $a$ is panmagic $a^2-1$ is a unit, so $a^2+1=0$ and $-1$ is a quadratic residue modulo $n$. Conversely, if $-1=a^2$ then $a^2-1=-2$ is a unit since $n$ is odd, so $a$ is panmagic. Moreover, $a^4-1=0$ and $2$ is the smallest exponent such that $(a^2)^2=1$, i.e. $\ord_{\rad(n)}(a^2)=2$. Thus, by Theorem \ref{compstrictarity}, $\pi_a\sD_n$ is strictly $N$-ary with $N=\ord_{\rad(n)}(a^2)+1=3$.

That $-1$ is a quadratic residue modulo $n$ or $\rad(n)$ if and only if all prime factors of $n$ are $4k+1$ primes is a well-known result of number theory \cite{IrRos} (and it follows from Lemma \ref{QRndec}).
\end{proof}
\noindent This is a far-reaching generalization of Thompson's observation that $\pi_{-2}D_5$ is a ternary subgroup. The smallest composite with such a ternary subgroup is $25$ with $\pi_2\sD_{25}$, and the smallest non-prime power is $65=5\cdot13$ with $\pi_8\sD_{65}$. Primes of the form $4k+1$, $5,13,17,29,37,41, ...$\,, are well-known in number theory since Girard and Fermat, who conjectured that they are the only primes representable as the sums of two perfect squares. This conjecture was later proved by Euler  \cite{IrRos}.

For characterizing groups of higher arities, there is no slick trick, and we need to better understand multiplicative orders of squares of panmagic units. By \eqref{QRndecomp}, $QR_{\rad(n)}\simeq QR_{p_1}\!\times\cdots\times QR_{{p_m}}$, so
\begin{equation*}
\ord_{\rad(n)}(a^2)=\lcm\big(\!\ord_{p_1}(a^2),\dots,\ord_{p_m}(a^2)\big)
\end{equation*}
For a prime $p$, all units $a\in\Z_p^{\times}$ are powers of a primitive root $r$, so all $a^2$ are powers of $r^2$. But $\ord_p(r^2)=\frac{p-1}{2}$, so the orders of its powers, $(r^2)^k$, are exactly the divisors of $\frac{p-1}{2}$. These are all and only orders of quadratic residues modulo a prime $p$. The panmagic condition excludes only order $1$ as $\pm1$ are the only non-panmagic units modulo a prime. Therefore, the set of orders of panmagic squares modulo $\rad(n)$ can be described as follows
\begin{equation}\label{Ord2rad}
\Big\{\lcm\big(d_1,\dots,d_m\big)\ \Big|\ d_i\,\big|\,\frac{p_i-1}2,\,d_i\geq2\Big\}.
\end{equation}
However, we can do better than that.
\begin{lemma}\label{PmagOrders} Let $2,3\nmid n=p_1^{\a_1}p_2^{\a_2}\dots p_m^{\a_m}$. Then $h=\ord_{\rad(n)}(a^2)$ for some panmagic unit $a\in\+{\Z}_n$ if and only if $h\mid\lcm\big(\frac{p_1-1}2,\dots,\frac{p_m-1}2\big)$ and $h$ is not relatively prime to any $\frac{p_i-1}2$. 
\end{lemma}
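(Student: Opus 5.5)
We start from the description \eqref{Ord2rad}: the set of values $\ord_{\rad(n)}(a^2)$ for panmagic $a$ is exactly the set of $\lcm(d_1,\dots,d_m)$ with $d_i\mid\frac{p_i-1}2$ and $d_i\geq2$. So we must show that this set coincides with the set of $h$ such that $h\mid L:=\lcm\big(\frac{p_1-1}2,\dots,\frac{p_m-1}2\big)$ and $\gcd\big(h,\frac{p_i-1}2\big)>1$ for every $i$. We take \eqref{Ord2rad} as given: by CRT and Lemma \ref{QRndec} the residues of $a$ modulo the $p_i$ can be chosen independently, so each $d_i$ ranges over all divisors of $\frac{p_i-1}2$ except $1$.

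\emph{Necessity.} Let $h=\lcm(d_1,\dots,d_m)$ with $d_i\mid\frac{p_i-1}2$ and $d_i\geq2$. Each $d_i$ divides $L$, so $h\mid L$. Moreover $d_i$ divides both $h$ and $\frac{p_i-1}2$, and $d_i\geq2$, so $\gcd\big(h,\frac{p_i-1}2\big)\geq d_i>1$.

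\emph{Sufficiency.} Let $h\mid L$ with $\gcd\big(h,\frac{p_i-1}2\big)>1$ for all $i$. Set $d_i:=\gcd\big(h,\frac{p_i-1}2\big)$. Then $d_i\mid\frac{p_i-1}2$ and $d_i\geq2$. We claim $\lcm(d_1,\dots,d_m)=h$. Since every $d_i$ divides $h$, the $\lcm$ divides $h$. For the reverse divisibility we compare prime-power exponents. Fix a prime $q$ and let $q^e\,\|\,h$. Because $h\mid L$, some $\frac{p_j-1}2$ is divisible by $q^e$. Hence $q^e\mid\gcd\big(h,\frac{p_j-1}2\big)=d_j$, so $q^e$ divides the $\lcm$. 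This gives equality, and \eqref{Ord2rad} then produces a panmagic $a$ with $\ord_{\rad(n)}(a^2)=h$.

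If one wants the explicit unit, choose $a$ modulo $p_i$ to be $r_i^{(p_i-1)/(2d_i)}$, where $r_i$ is a primitive root mod $p_i$, and lift via CRT arbitrarily modulo $p_i^{\a_i}$. Then $a^2$ has order exactly $d_i$ modulo $p_i$. Since $d_i\geq2$, we get $a^2\not\equiv1\pmod{p_i}$, so $a$ is panmagic.

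The only step needing care is the $\lcm$ identity in the sufficiency direction: one must argue prime by prime that the full $q$-part of $h$ is captured by a single $d_j$, and this is exactly where the hypothesis $h\mid L$ is used. The rest is routine bookkeeping on top of \eqref{Ord2rad}.
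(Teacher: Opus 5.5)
Your proof is correct and follows the paper's argument. You use the same reduction to the set \eqref{Ord2rad}, the same necessity argument, and the same choice $d_i=\gcd\big(h,\frac{p_i-1}2\big)$ for sufficiency; the only difference is that you verify $\lcm(d_1,\dots,d_m)=h$ prime by prime, where the paper cites distributivity of $\gcd$ over $\lcm$.
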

\begin{proof} Let $h$ be in the set \eqref{Ord2rad}, then 
$$
h=\lcm\big(d_1,\dots,d_m\big)\ \big| \lcm\Big(\frac{p_1-1}2,\dots,\frac{p_m-1}2\Big)
$$ because $d_i\,\big|\,\frac{p_i-1}2$ by definition, and 
$$
\gcd\Big(\lcm\big(d_1,\dots,d_m\big)\,,\frac{p_i-1}2\Big)\geq\gcd\Big(d_i\,,\frac{p_i-1}2\Big)=d_i\geq2,
$$
so $h$ is not relatively prime to any $p_i$.

Conversely, let $h$ satisfy the two conditions of the lemma, and set $d_i:=\gcd\big(h,\frac{p_i-1}2\big)$. By assumption, $d_i\geq2$, and since $\gcd$ distributes over $\lcm$ 
$$
\lcm\big(d_1,\dots,d_m\big)=\gcd\left(h,\lcm\Big(\frac{p_1-1}2,\dots,\frac{p_m-1}2\Big)\right)=h,
$$ 
so $h$ is in the set \eqref{Ord2rad}.
\end{proof}
The desired characterization of strict panmagic arities is now at hand.
\begin{theorem}\label{CompStrictArities} Let $2,3\nmid n=p_1^{\a_1}p_2^{\a_2}\dots p_m^{\a_m}$ and $N-1={q_1}^{\b_1}\dots{q_m}^{\b_m}$ be the prime factorizations. Then $N$ is a strict arity of some panmagic coset of $\sD_n$ if and only if $N-1\mid\lcm\big(\frac{p_1-1}2,\dots,\frac{p_m-1}2\big)$ and $N-1$ is not relatively prime to any $\frac{p_i-1}2$. Equivalently, strictly $N$-ary affine panmagic cosets of $\sD_n$ exist if and only if all $p_i$ are of the form $2q_jk+1$ with some $j$, and for every $j$ there exists $p_i$ of the form $2{q_j}^{\!\b_j}l+1$.
\end{theorem}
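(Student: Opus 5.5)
The first equivalence follows by chaining Theorem \ref{compstrictarity} with Lemma \ref{PmagOrders}. By Theorem \ref{compstrictarity}, every panmagic coset $\pi_a\sD_n$ has strict arity exactly $\ord_{\rad(n)}(a^2)+1$. Every panmagic coset of $\sD_n$ in $\Aff(\Z_n)$ has the form $\pi_{a,b}\sD_n=\pi_a\sD_n$ with $a$ panmagic, because $\k^b\in D_n\subseteq\sD_n$ and $\sD_n$ is normal. Hence the set of strict arities is exactly $\{h+1\}$, where $h$ ranges over the orders $\ord_{\rad(n)}(a^2)$ of panmagic units. Lemma \ref{PmagOrders} characterizes these $h$ as the divisors of $\lcm\big(\frac{p_1-1}2,\dots,\frac{p_m-1}2\big)$ that are not coprime to any $\frac{p_i-1}2$. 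Setting $h=N-1$ gives the first statement.

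For the second formulation, I translate each of the two conditions into a statement about prime factors. Write $N-1=\prod_j q_j^{\b_j}$; note that the index count of the $q_j$ need not equal $m$.

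\textbf{Non-coprimality.} The condition $\gcd\big(N-1,\frac{p_i-1}2\big)>1$ holds exactly when some prime $q_j$ divides $\frac{p_i-1}2$. That is the same as saying $p_i=2q_jk+1$ for some integer $k$.

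\textbf{Divisibility.} The condition $N-1\mid L:=\lcm\big(\frac{p_i-1}2\big)$ holds exactly when, for every $j$, $q_j^{\b_j}\mid L$. The exponent of $q_j$ in an $\lcm$ is the maximum of its exponents in the arguments. So $q_j^{\b_j}\mid L$ exactly when $q_j^{\b_j}\mid\frac{p_i-1}2$ for some $i$, i.e. $p_i=2q_j^{\b_j}l+1$.

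Combining the two gives the stated equivalence.

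The only step needing care is the reduction from arbitrary panmagic cosets to cosets of the form $\pi_a\sD_n$, together with the convention for the trivial case $N-1=1$. In that case the divisibility condition is vacuous, but $1$ is coprime to everything, so $N=2$ is correctly excluded, consistent with $\sD_n$ itself not being panmagic. Everything else is bookkeeping with prime exponents.
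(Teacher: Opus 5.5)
Your proposal is correct and follows the paper's proof. You chain Theorem~\ref{compstrictarity} with Lemma~\ref{PmagOrders} at $h=N-1$, then translate the coprimality and lcm-divisibility conditions prime by prime using the maximum-exponent rule for the lcm. Your extra remarks are sound refinements of points the paper leaves implicit:
\begin{itemize}
\item every panmagic coset of $\sD_n$ equals some $\pi_a\sD_n$, by normality;
\item $N=2$ is correctly excluded;
\item the number of primes $q_j$ need not equal $m$.
\end{itemize}
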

\begin{proof} 
The first formulation is a direct consequence of Theorem \ref{compstrictarity} and Lemma \ref{PmagOrders} applied to $h=N-1$. For the second, note that to get $\frac{{p_i}-1}{2}$ not relatively prime to $N-1$, we must have $p_i=2q_jk+1$ with some $j$. And to get $N-1\mid\lcm\big(\frac{p_1-1}{2},\dots,\frac{p_m-1}{2})$, we must have  ${q_j}^{\!\b_j}\mid \lcm(\frac{p_1-1}{2},\dots,\frac{p_m-1}{2})$ for every $j$. Therefore, for every $j$ some $p_i$ must be of the form $2{q_j}^{\!\b_j}l+1$.
\end{proof} 
\noindent When there is a single prime factor $p$, the first condition becomes $N-1\mid\frac{p-1}2$ and the second then reduces to $N-1\geq2$. Moreover, we see that strict arities for primes and their powers are exactly the same.
\begin{corollary}\label{primeallstrictarity}
Let $p\geq5$ be prime and $N\geq3$. Then $N$ is a strict arity of some panmagic coset of $\sD_{p^\a}$ if and only if $p=2(N-1)k+1$ for some integer $k$. 
\end{corollary}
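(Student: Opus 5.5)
The plan is to specialize Theorem~\ref{CompStrictArities} to the case $m=1$, where $n=p^\a$ has a single prime factor $p$. Then $\rad(n)=p$, and the $\lcm$ in the first condition of Theorem~\ref{CompStrictArities} collapses to the single number $\frac{p-1}2$. The condition $N-1\mid\lcm\big(\frac{p_1-1}2,\dots,\frac{p_m-1}2\big)$ therefore becomes $N-1\mid\frac{p-1}2$.

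Next I will dispose of the second condition, that $N-1$ is not relatively prime to $\frac{p-1}2$. If $N-1\mid\frac{p-1}2$, then $\gcd\big(N-1,\frac{p-1}2\big)=N-1$. Since $N\geq3$ by hypothesis, this gcd is at least $2$, so the second condition holds automatically and adds nothing. Hence $N$ is a strict arity of a panmagic coset of $\sD_{p^\a}$ if and only if $N-1\mid\frac{p-1}2$. This is equivalent to $\frac{p-1}2=(N-1)k$, i.e.\ $p=2(N-1)k+1$ for some integer $k$.

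It remains to note that the hypotheses of Theorem~\ref{CompStrictArities} hold: $2,3\nmid p^\a$ because $p\geq5$ is prime. The exponent $\a$ plays no role, since only $\rad(p^\a)=p$ enters through Theorem~\ref{compstrictarity}. This is why prime powers have exactly the same strict arities as primes.

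There is no real obstacle. The only point needing care is that the second condition is genuinely redundant, which uses $N\geq3$. With $N=2$, the gcd would be $1$ and the coset would be $\sD_n$ itself, which is not panmagic. As a direct check, one can instead take a primitive root $r$ modulo $p$ and lift it to $\Z_{p^\a}$ by the Chinese Remainder theorem (only the residue mod $p$ matters). Then set $a:=r^{k}$ with $k=\frac{p-1}{2(N-1)}$. This gives $\ord_p(a^2)=N-1\geq2$, so $a^2\not\equiv1\pmod p$ and $a$ is panmagic, and Theorem~\ref{compstrictarity} applies to $\pi_a\sD_{p^\a}$.
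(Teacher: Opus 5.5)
Your proposal is correct and follows the paper's argument: you specialize Theorem~\ref{CompStrictArities} to the single prime $p=\rad(p^\a)$, so the first condition becomes $N-1\mid\frac{p-1}2$, and the second condition is redundant because $N-1\geq2$. Your supplementary direct construction with $a=r^{(p-1)/(2(N-1))}$ is also fine, although the Chinese Remainder theorem is not needed to lift $r$ to $\Z_{p^\a}$, since any integer representative of $r$ is already a unit there.
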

\noindent For $N=3$ we reproduce a result from Corollary \ref{PanTerComp}, where $4k+1$ primes made their appearance.

In general, different $q_j\neq q_{j'}$ in Theorem \ref{CompStrictArities} can be covered by the same $p_i$ of the form $2{q_j}^{\b_j}{q_{j'}}^{\b_{j'}}k+1$. As an illustration, consider strictly $13$-ary panmagic cosets. Since $N-1=2^2\cdot 3$, all prime factors of $n$ must be of the form $4k+1$ or $6k+1$, and there must be at least one prime factor of the form $8k+1$ and at least one of the form  $6k+1$.  The latter can be distinct or they can both be the same factor of the form $24k+1$, so both $n=7\cdot17$ and $n=73$ work and hence admit strictly $13$-ary panmagic cosets.

\section{Cycle types of affine panmagic permutations}\label{CycAffPan}

\subsection{Cycle types and fixed points}

The cycle structure of affine permutations can be quite intricate in general \cite{Mars}, and its explicit description does not seem to be known. Luckily, panmagic permutations are not general in one important respect -- they always have exactly one fixed point. This simply rephrases a part of their definition that the trace of their matrix has to be $1$. On the other hand, the fixed point equation $\pi_{a,b}(x)=ax+b=x$ is equivalent to $(a-1)x=-b$, which has solutions if and only if 
$\gcd(a-1,n)\mid b$. The exact same condition comes up in a group-theoretic context, and is key to describing panmagic cycle types. 
\begin{lemma}\label{FixConj}
An affine permutation $\pi_{a,b}$ is conjugate to $\pi_a$ if and only if it has fixed points, i.e. $\gcd(a-1,n)\mid b$, and then it can be conjugated into $\pi_a$ by a power of $\k$. 
\end{lemma}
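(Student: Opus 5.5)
The plan is to prove the two directions separately, with the reverse direction carried out by an explicit conjugation by a power of $\k$. For the forward direction, note that $\pi_a$ fixes $0$. Conjugation preserves the number of fixed points, since $\tau\pi_a\tau^{-1}$ fixes $\tau(0)$. Hence if $\pi_{a,b}$ is conjugate to $\pi_a$ it has a fixed point. As already observed in the text, the fixed point equation $ax+b=x$, i.e. $(a-1)x\equiv-b\pmod{n}$, is solvable if and only if $\gcd(a-1,n)\mid b$. This is the standard solvability criterion for linear congruences \cite{Rosen}, which I will simply cite.

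For the converse, suppose $x_0$ is a fixed point of $\pi_{a,b}$, so that $b\equiv(1-a)x_0$. I will compute $\k^{c}\pi_a\k^{-c}$ directly: for every $i$,
$$
\k^{c}\pi_a\k^{-c}(i)=a(i-c)+c=ai+(1-a)c=\pi_{a,(1-a)c}(i).
$$
Taking $c=x_0$ gives $\pi_{a,(1-a)x_0}=\pi_{a,b}$. Therefore $\pi_{a,b}=\k^{x_0}\pi_a\k^{-x_0}$, which is conjugate to $\pi_a$ by a power of $\k$, as claimed. The same identity also follows from the commutation relation $\pi_a\k=\k^a\pi_a$ stated in Section \ref{Affine}, which gives $\pi_a\k^{-c}=\k^{-ac}\pi_a$, so the computation can be phrased entirely in the paper's relations if preferred.

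There is no real obstacle here. The only point needing care is the equivalence between existence of fixed points and the condition $\gcd(a-1,n)\mid b$. This is the classical linear congruence theorem, valid for arbitrary $a-1$ including non-units, since $a-1$ need not be invertible when $\pi_{a,b}$ is not panmagic. The rest is the one-line computation above.
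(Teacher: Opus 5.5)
Your proof is correct and follows essentially the same route as the paper: the forward direction uses that $\pi_a$ fixes $0$ and that conjugation preserves fixed points, and the converse conjugates $\pi_a$ by a power of $\k$ to obtain $\pi_{a,(1-a)c}$. The only cosmetic difference is that you take the exponent to be the fixed point $x_0$ directly. The paper instead solves $(a-1)t\equiv b$ using the gcd criterion, which amounts to $t=-x_0$ in its convention $\k^{-t}\pi_a\k^{t}$.
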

\begin{proof}
One direction is trivial since $\pi_a$ always has a fixed point, $0\equiv n$, and conjugate permutations have the same cycle type. For the converse, conjugating $\pi_a$ by $\k^t$ gives $\k^{-t}\pi_a\k^t(x)=ax+(a-1)t$. We can make it $\pi_{a,b}$ when $(a-1)t=b$ has a solution, i.e. $\gcd(a-1,n)\mid b$. But this happens if and only if $\pi_{a,b}$ has a fixed point.
\end{proof}
Recall that two permutations have the same cycle types if and only if they are conjugate in $S_n$. So Lemma \ref{FixConj} allows us to characterize cycle types of all affine permutations with fixed points if we can do it for $\pi_a$. The cycle type of $\pi_a$ is relatively easy to find because powers of $\pi_a$ are easy to track. The following description of cycle types follows from the one proved in \cite{BK-V} for $\pi_a$, where combinatorial applications to Polya enumeration with the symmetry group $\Z_n^{\times}$ are discussed.
\begin{theorem}\label{GAcycletypes} Let $a$ be the slope of an affine permutation with fixed points, in particular, of a panmagic permutation. Then it has $\frac{\varphi(d)}{\ord_d(a)}$ cycles of length $\ord_d(a)$ for every $d\mid n$, and no other cycles.  
\end{theorem}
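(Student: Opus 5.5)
By Lemma \ref{FixConj}, any affine permutation $\pi_{a,b}$ with a fixed point is conjugate to $\pi_a$, and conjugate permutations have the same cycle type. So it suffices to determine the cycle type of $\pi_a(x)=ax$ on $\Z_n$. Panmagic permutations have exactly one fixed point, so they are covered as well.

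The plan is to partition $\Z_n$ into the sets
$$S_d:=\{x\in\Z_n\mid \gcd(x,n)=n/d\}, \qquad d\mid n.$$
Each $S_d$ is invariant under $\pi_a$, because $a$ is a unit and hence $\gcd(ax,n)=\gcd(x,n)$. Every $x\in S_d$ can be written uniquely as $x=(n/d)\,y$ with $y\in\+{\Z}_d$. This gives a bijection $S_d\leftrightarrow\+{\Z}_d$, so $|S_d|=\pfi(d)$. Under this bijection, $\pi_a$ acts on $\+{\Z}_d$ as multiplication by $a \bmod d$:
$$a\,(n/d)\,y\equiv (n/d)(ay) \pmod{n} \iff ay \text{ taken modulo } d.$$
The sets $S_d$ for $d\mid n$ exhaust $\Z_n$, since every residue has some gcd with $n$. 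For example, $x=0$ lies in $S_1$, which is the fixed point.

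Next we compute the cycle lengths on each piece. For $y\in\+{\Z}_d$, the cycle of $y$ under multiplication by $a$ has length equal to the smallest $k\ge1$ with $a^ky\equiv y\pmod d$. Since $y$ is invertible modulo $d$, this condition is equivalent to $a^k\equiv1\pmod d$, so the length is $\ord_d(a)$. This holds for every $y$ in the piece, so all cycles on $S_d$ have the same length $\ord_d(a)$. There are therefore exactly $\pfi(d)/\ord_d(a)$ of them. For $d=1$ we use the convention $\ord_1(a)=1$ and $\pfi(1)=1$, which gives the single fixed point $0$.

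Collecting the contributions over all $d\mid n$ yields exactly the stated cycle counts, and there are no other cycles because the $S_d$ partition $\Z_n$. The only step needing care is the identification of $S_d$ with $\+{\Z}_d$, in particular checking that $x=(n/d)y$ has $\gcd(x,n)=n/d$ exactly when $\gcd(y,d)=1$. This follows from
$$\gcd\big((n/d)y,\,n\big)=(n/d)\gcd(y,d).$$
The rest is routine.
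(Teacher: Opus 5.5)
Your proof is correct and follows essentially the same route as the paper: reduce to $\pi_a$ via Lemma~\ref{FixConj}, sort residues by $\gcd(z,n)$, identify each class with the units of $\Z_{n/\gcd(z,n)}$, and read off every cycle length there as a multiplicative order. The only cosmetic difference is that the paper gets the cycle length directly from $(a^m-1)z\equiv0\pmod{n}\iff a^m\equiv1\pmod{n/d'}$. You instead first transport the action to multiplication by $a$ on $\+{\Z}_d$ and then cancel the unit $y$.
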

\begin{proof} Consider a cycle containing some $z\in\Z_n$ with $\gcd(z,n)=d'$. Starting with $z$, we generate the cycle $(z\ az\ a^2z\ \dots\  a^{m-1}z)$. Let $m$ be such that $a^mz=z$ or $(a^m-1)z\equiv 0\pmod{n}$. Reducing the moduli, this congruence is equivalent to $a^m-1\equiv 0\pmod{n/d'}$. The length of the cycle is the minimal such $m$, which is, by definition, $\ord_{n/d'}(a)$. As $\gcd(z,n)=d'$ is equivalent to $\gcd(z/d',n/d')=1$, $z\mapsto z/d'$ gives a $1$-$1$ correspondence between $z$ in $\Z_n$ with $\gcd(z,n)=d'$ and units in $\Z_{n/d'}$. Since there are $\varphi(n/d')$ units in $\Z_{n/d'}$ and $\ord_{n/d'}(a)$ of them in each cycle, we count $\frac{\varphi(n/d')}{\ord_{n/d'}(a)}$ such cycles. Since $\gcd(d',n)=d'$ our $d'$ runs over all and only divisors of $n$ as $z$ runs over all residues in $\Z_n$, and then so does $d=n/d'$.
\end{proof}
Equipped with Theorem \ref{GAcycletypes}, we can distinguish panmagic from other affine permutations based on their cycle types. 
\begin{corollary}\label{panmagiccycletype}
An affine permutation is panmagic if and only if its disjoint cycle decomposition has exactly one fixed point and no transpositions. 
\end{corollary}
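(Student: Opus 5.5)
We need to show: an affine permutation $\pi_{a,b}$ is panmagic if and only if it has exactly one fixed point and no 2-cycles. The plan is to reduce everything to the slope $a$, using Corollary \ref{AfPmag} (panmagic means $a-1$ and $a+1$ are units), and then read the cycle counts off Theorem \ref{GAcycletypes}. Throughout, $n$ is a Polya dimension, so $n$ is odd.

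\emph{Panmagic implies the cycle conditions.} Suppose $\pi_{a,b}$ is panmagic. Then $a-1$ is a unit, so the fixed point equation $(a-1)x=-b$ has exactly one solution. Hence there is exactly one fixed point, which also lets us apply Theorem \ref{GAcycletypes}. That theorem says a cycle of length $2$ occurs exactly when $\ord_d(a)=2$ for some divisor $d\mid n$ with $d>1$. This would require $a^2\equiv1\pmod d$, so some prime $p\mid d$ divides $(a-1)(a+1)$. But $a^2-1$ is a unit modulo $n$, so this is impossible, and there are no transpositions.

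\emph{The cycle conditions imply panmagic.} Suppose $\pi_{a,b}$ has exactly one fixed point. Since a fixed point exists, $\gcd(a-1,n)\mid b$. The solutions of $(a-1)x=-b$ then number exactly $\gcd(a-1,n)$, so $\gcd(a-1,n)=1$ and $a-1$ is a unit. (Alternatively, Theorem \ref{GAcycletypes} counts fixed points as $\sum_{d:\ord_d(a)=1}\varphi(d)$, which is the number of residues killed by $a-1$.)

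It remains to show $a+1$ is a unit. Suppose not, and pick a prime $p\mid\gcd(a+1,n)$. Then $a\equiv-1\pmod p$, and since $p$ is odd, $a\not\equiv1\pmod p$. So $\ord_p(a)=2$, and taking $d=p$ in Theorem \ref{GAcycletypes} gives $\frac{p-1}{2}\geq1$ cycles of length $2$, contradicting the hypothesis. Thus $a\pm1$ are both units, and $\pi_{a,b}$ is panmagic by Corollary \ref{AfPmag}.

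The main obstacle is the fixed point count in the converse. One has to be careful that ``exactly one fixed point'' forces $\gcd(a-1,n)=1$, not merely that a fixed point exists. The standard count of solutions of a linear congruence, or the $\varphi$-sum identity $\sum_{d\mid g}\varphi(d)=g$ with $g=\gcd(a-1,n)$, settles this. Note also that oddness of $n$ is used to ensure $-1\not\equiv1\pmod p$.
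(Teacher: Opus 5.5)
Your proof is correct and follows essentially the paper's route: reduce to the slope via Corollary~\ref{AfPmag} and read the cycle lengths $\ord_d(a)$, $d\mid n$, off Theorem~\ref{GAcycletypes}. The paper compresses this into the single chain ``panmagic $\iff a^2\not\equiv1\pmod{d}$ for all $d\mid n$, $d\neq1$, $\iff \ord_d(a)\geq3$ for all $d\neq1$.'' You differ only in counting fixed points directly from the congruence $(a-1)x=-b$ rather than through $\ord_d(a)\neq1$, and in spelling out the converse (including why Theorem~\ref{GAcycletypes} applies there), which the paper leaves implicit.
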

\begin{proof}
As we know from Section \ref{PMagPres}, the panmagic condition $a^2-1\in \Z_n^{\times}$ is equivalent to $a^2\not\equiv 1\pmod{d}$ for any $d\mid n$, $d\neq 1$. But this means that $\ord_d(a)\geq3$ for any $d\neq 1$, so for any $z\neq n$, the cycle containing $z$ will have length at least  $3$ and $n$ will be a fixed point. 
\end{proof}
A similar argument shows that $\pi_a$ is multidihedral, i.e. $a^2\equiv 1\pmod{n}$, if and only if all of its disjoint cycles are fixed points or transpositions.
Indeed, then $\ord_n(a)=2$ and for any $d\mid n$ we have $\ord_d(a)\leq 2$. However, multidihedral or even dihedral permutations, unlike panmagic ones, do not need to have fixed points. The cyclic shift $\k$ is a striking counterexample with a single cycle of full length $n$. Such ``full period" permutations, but with less predictable number pattern, are described by the Hull-Dobell theorem and produced by linear congruential pseudo-random generators \cite{HD,Mars}.

\subsection{Monotypic cosets of $\mD_n$}

To complete our characterization of affine panmagic cycle types, we will describe their cosets where all permutations have the same cycle type, {\it monotypic} cosets for short. As we will prove in the next section, lifted dihedral cosets are never monotypic in non-square-free dimensions. However, the situation is much more interesting for multidihedral cosets that are strictly smaller in those dimensions, and they are the ones we study in this section. We start by relating the multiplicative orders of $a$ and $a^2$.
\begin{lemma}\label{ord+-a}
If $\ord_n(a^2)$ is even then $\ord_n(a)=\ord_n(-a)=2\ord_n(a^2)$. If $\ord_n(a^2)$ is odd then one of $\ord_n(-a)$ and $\ord_n(a)$ is equal to $\ord_n(a^2)$ and the other to $2\ord_n(a^2)$. 
\end{lemma}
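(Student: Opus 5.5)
Let $h:=\ord_n(a^2)$, and write $\ord_n(\pm a)=m$ in either case. The plan is to use only elementary facts about orders in the group $\Z_n^{\times}$, together with the fact that $n$ is odd, so $-1\not\equiv1$ and $\ord_n(-1)=2$ (for $n>2$). Since $(\pm a)^2=a^2$, we have $(\pm a)^{2h}=1$. Also, if $(\pm a)^m=1$ then $a^{2m}=1$, so $h\mid m$. Hence $m\in\{h,2h\}$ for both $a$ and $-a$, by the standard relation $\ord(x^2)=\ord(x)/\gcd(2,\ord(x))$. I would state this relation directly: $m$ is divisible by $h$ and divides $2h$.

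\emph{Case $h$ even.} Suppose $\ord_n(a)=h$. Then $a^h=1$ with $h$ even, so $(a^2)^{h/2}=1$, contradicting the minimality of $h=\ord_n(a^2)$. Hence $\ord_n(a)=2h$. The identical argument applied to $-a$, whose square is also $a^2$, gives $\ord_n(-a)=2h$.

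\emph{Case $h$ odd.} Here $(-a)^h=(-1)^ha^h=-a^h$. Now $(a^h)^2=(a^2)^h=1$, so $a^h$ is a square root of unity. If $a^h=1$ then $\ord_n(a)=h$ and $(-a)^h=-1\neq1$, forcing $\ord_n(-a)=2h$. If $a^h\neq1$ then $\ord_n(a)=2h$, and it remains to get $(-a)^h=1$, i.e.\ $a^h=-1$.

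This last step is the main obstacle: $a^h$ is merely some square root of unity modulo $n$, and for composite $n$ there are other square roots than $\pm1$. So the statement as written may need the hypothesis that $a^h\in\{\pm1\}$, for instance that $n$ is a prime power. I would first try to see whether the intended context, panmagic $a$ with cyclic $\langle a\rangle$ considerations, rules this out. If not, I would note the counterexample risk (e.g.\ $n=35$, $a\equiv1\pmod5$, $a\equiv-1\pmod7$ gives $h=1$, $\ord(a)=\ord(-a)=2$) and restrict the odd case to $a^h=\pm1$, which holds when $\sqrt{1}_n=\{\pm1\}$. Under that assumption the argument above completes the proof.
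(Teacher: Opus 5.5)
Your even case is the paper's argument. Your diagnosis of the odd case is also right: the lemma is false as stated, and the paper's proof breaks exactly where you stopped. The paper handles the odd case with ``For definiteness, suppose $\ord_n(a)=\ord_n(a^2)$''. That step presupposes that one of $\ord_n(\pm a)$ equals $h$. What the rest of the argument proves is only your observation that $\ord_n(a)=h$ forces $\ord_n(-a)=2h$, i.e.\ that \emph{at most} one of them equals $h$. Your counterexample checks out: $6^2=36\equiv1\pmod{35}$ gives $h=1$, while $6\not\equiv1$ and $-6\equiv29\not\equiv1$ give $\ord_{35}(6)=\ord_{35}(29)=2$.

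The intended context does not rescue the statement. Your $a=6$ is not panmagic, since $a-1=5$, but that is inessential. Take $n=77$, a Polya dimension, and the panmagic unit $a=2$. Then $\ord_{77}(4)=\lcm(3,5)=15$ is odd, yet $\ord_{77}(2)=\lcm(3,10)=30$ and $\ord_{77}(-2)=\lcm(6,5)=30$. The reason is that $2^{15}\equiv43$ is a nontrivial square root of unity, with $43\equiv1\pmod{7}$ and $43\equiv-1\pmod{11}$.

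Your restricted version, with $a^h\in\{\pm1\}$ (automatic when $n$ is an odd prime power), is correct. For the paper's downstream uses, which range over a whole coset $a\sqrt{1}_n$, a more useful repair is the following. If $h$ is odd, then $\eta:=a^h\in\sqrt{1}_n$, and $(\e a)^h=\e^h\eta=\e\eta$ for every $\e\in\sqrt{1}_n$. Hence $\ord_n(\e a)=h$ for $\e=\eta$ alone, and $\ord_n(\e a)=2h$ for every other $\e$. So the dichotomy holds for the pair $\pm\eta a$ rather than $\pm a$, and this is all that Corollary~\ref{CompLongCyc} needs. In Theorem~\ref{componecycle} the lemma is applied modulo a divisor of $n$; there you choose $\e\in\sqrt{1}_n$ with $-\e$ congruent to the corresponding $\eta$ modulo that divisor, which exists by CRT.
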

\begin{proof} 
By a standard result of group theory \cite{Gallian}, 
\begin{equation}\label{orda^2} 
\ord_n(a^2)=\begin{cases}\ord_n(a),\,\ord_n(a)\text{ odd}\\\frac12\ord_n(a),\,\ord_n(a)\text{ even.}\end{cases}
\end{equation}
Hence, if $\ord_n(a^2)$ is even then so is $\ord_n(a)$. Indeed, if it were odd then by \eqref{orda^2} $\ord_n( a)=\ord_n(a^2)$ would also be odd, a contradiction. But then, by \eqref{orda^2} again, $\ord_n( a)=2\ord_n(a^2)$, and the same argument applies to $-a$.

Now suppose $\ord_n(a^2)$ is odd. Since  $\ord_n(a^2)=\ord_n\big((-a)^2\big)$, formula \eqref{orda^2} implies that $\ord_n(\pm a)$ is either $\ord_n(a^2)$ or $2\ord_n(a^2)$. For definiteness, suppose $\ord_n(a)=\ord_n(a^2)$. Since it is odd $(-a)^{\ord_n(a^2)}=- a^{\ord_n(a)}=-1$, so $\ord_n(-a)\neq\ord_n(a^2)$ and it must be $2\ord_n(a^2)$.
\end{proof}
We can now characterize the longest cycles of panmagic permutations. The characterization is based on Lemma \ref{ord+-a} and the following simple observation: $\ord_{d'}(x)\mid\ord_d(x)$ when $d'\mid d$. Indeed, if 
$x^m\equiv 1\pmod{d}$ then all the more $x^m\equiv 1\pmod{d'}$, and $\ord_{d'}(x)$ must divide any such $m$ by Lagrange's theorem. 
\begin{corollary}\label{CompLongCyc} Let $2,3\nmid n\geq5$, $a\in\+{\Z}_n$ be panmagic and $N=\ord_n(a^2)+1$ be the strict arity of $\pi_a\mD_n$. When $N$ is odd, the longest cycles of all permutations in $\pi_a\mD_n$ are of length $2(N-1)$. When $N$ is even, the longest cycles are of length $2(N-1)$ in some of them and of length $N-1$ in others.
\end{corollary}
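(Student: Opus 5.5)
Every element of $\pi_a\mD_n$ is affine with slope $\e a$ for some $\e\in\sqrt{1}_{n}$, so it is enough to understand the slopes. Each slope $\e a$ is panmagic, since $\e$ preserves panmagic by Corollary \ref{PanRad}. Hence every permutation in the coset has a fixed point, and Theorem \ref{GAcycletypes} applies: its cycle lengths are exactly the numbers $\ord_d(\e a)$ for $d\mid n$. By the observation that $\ord_{d'}(x)\mid\ord_d(x)$ whenever $d'\mid d$, the longest cycle has length $\ord_n(\e a)$, attained at $d=n$. This length actually occurs, because $\varphi(n)/\ord_n(\e a)\geq1$. The problem therefore reduces to computing $\ord_n(\e a)$ as $\e$ runs over $\sqrt{1}_{n}$.

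Next, $(\e a)^2=\e^2a^2=a^2$ for every $\e$, so $\ord_n((\e a)^2)=\ord_n(a^2)=N-1$ across the whole coset. Lemma \ref{ord+-a}, applied to each slope $\e a$ (its proof works for any unit), then does most of the work. When $N$ is odd, $N-1$ is even, so $\ord_n(\e a)=2(N-1)$ for every $\e$. Thus every permutation in $\pi_a\mD_n$ has longest cycles of length $2(N-1)$.

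When $N$ is even, $N-1$ is odd, and by \eqref{orda^2} each $\ord_n(\e a)$ is either $N-1$ or $2(N-1)$. The remaining step is to show that both values actually occur, and Lemma \ref{ord+-a} handles this too. Since $-1\in\sqrt{1}_{n}$, both $\pi_a$ and $\pi_{-a}$ lie in the coset, and the lemma says that one of $\ord_n(a)$, $\ord_n(-a)$ equals $N-1$ while the other equals $2(N-1)$.

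I expect the only delicate point to be the justification that the longest cycle length is $\ord_n$ of the slope. This needs the fixed point that panmagic guarantees, via Lemma \ref{FixConj}, and the divisibility of orders under divisors $d'\mid d$. Once that is in place, the rest is direct bookkeeping with Lemma \ref{ord+-a}.
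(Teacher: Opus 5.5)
Your proof is correct and takes the paper's approach: Theorem \ref{GAcycletypes}, which applies because panmagic slopes guarantee a fixed point, combined with $(\e a)^2=a^2$, Lemma \ref{ord+-a}, and divisibility of orders. Your version is slightly tidier than the paper's, since you use divisibility to reduce to $\ord_n(\e a)$ before invoking the lemma, and you use $\pi_{\pm a}\in\pi_a\mD_n$ explicitly to show that both lengths occur when $N$ is even.
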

\begin{proof}
Slopes of permutations in $\pi_a\mD_n$ are $\e a$ with $\e\in\sqrt{1}_n$, so, by Theorem \ref{GAcycletypes}, their cycle lengths are given by  $\ord_d(\e a)$ with $d\mid n$. Since $(\e a)^2=a^2$, Lemma \ref{ord+-a} applied to $\e a$ implies that $\ord_d(\e a)=\ord_d(-\e a)=2\ord_d(a^2)$ when $\ord_d(a^2)$ is even. When it is odd and $d\neq1$, then one of them is $\ord_d(a^2)$ and the other is $2\ord_d(a^2)$. Both cases give that the longest cycle of $\pi_{\e a}$ has the length  $2\ord_n(a^2)=2(N-1)$ because every other $\ord_{d'}(n)\mid 2(N-1)$ and all permutations in $\pi_a\mD_n$ are conjugate to $\pi_{\e a}$ for some $\e\in\sqrt{1}_n$.
\end{proof}
As follows from Theorem \ref{GAcycletypes}, the only cycles in the decomposition of a panmagic permutation in prime dimensions are the longest cycles and the fixed point. Therefore, Corollary \ref{CompLongCyc} suffices to characterize monotypic cosets in the prime case. However, there can be many more cycles in composite dimensions. Of course, if $N$ is even then Corollary \ref{CompLongCyc} immediately rules out monotypism. Moreover, its proof suggests monotypism precisely when $\ord_d(a^2)$ is even for all $d\mid n$, $d\neq1$. Unfortunately, there is a complication. Of course, if $\ord_d(a^2)$ is odd then $\pi_{\e a}$ and $\pi_{-\e a}$ will have non-matching cycles for this $d$, but they may be matched by cycles for some other $d'\mid n$. To rule out this possibility for at least some $d$, we need to look into even and odd multiplicative orders modulo divisors of $n$ more closely.

The next lemma shows that among all divisors $d$ of $n$ with $\ord_d(x)$ odd, there is the greatest one by divisibility. To prove it, we will need two elementary properties of multiplicative orders. First, when $k$ and $l$ are relatively prime, $\ord_{kl}(x)=\lcm\!\big(\!\ord_{k}(x),\ord_{l}(x)\big)$ \cite{Rosen}. This is a consequence of the Chinese Remainder theorem. And second, $\ord_{p^{\a}}(x)=p^k\ord_{p}(x)$ with some $k\leq\a$ for any odd prime $p$ \cite{Nath}.
\begin{lemma}\label{ordaevenodd}
Let $n$ be odd. Then there exists $h\mid n$ such that for any $d\mid n$ with odd $\ord_{d}(x)$ we have $d\mid h$. Moreover, all odd $\ord_d(x)$ with $d\mid n$ are divisors of $\ord_h(x)$, and $\ord_d(x)$ is even for all $d\mid n$, $d\neq1$ if and only if $\ord_p(x)$ is even for every prime $p\mid n$.
\end{lemma}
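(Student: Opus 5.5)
The plan is to construct $h$ explicitly as the largest divisor of $n$ on which $x$ has odd order. Set
$$
h:=\prod_{p^{\a}\,\|\,n,\ \ord_{p}(x)\text{ odd}} p^{\a}.
$$
Here $p$ runs over the prime powers exactly dividing $n$; an empty product gives $h=1$. The first step is to record the prime-power behavior. By the second elementary property, $\ord_{p^\b}(x)=p^k\ord_p(x)$ for $1\le \b\le\a$, and since $p$ is odd, $\ord_{p^\b}(x)$ has the same parity as $\ord_p(x)$. Hence $x$ has odd order modulo $p^\b$ if and only if $\ord_p(x)$ is odd.

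Next, take any $d\mid n$ and factor it as $d=\prod p^{\b_p}$. By the first property (CRT), applied repeatedly to the coprime prime-power factors,
$$
\ord_d(x)=\lcm_p\big(\ord_{p^{\b_p}}(x)\big).
$$
An lcm is odd exactly when every entry is odd. So $\ord_d(x)$ is odd if and only if $\ord_p(x)$ is odd for every prime $p\mid d$. In that case every prime factor of $d$ is among those used to build $h$, and $p^{\b_p}\mid p^{\a_p}$, so $d\mid h$. Applying the same criterion to $h$ itself shows that $\ord_h(x)$ is odd. For the divisibility claim, use the observation preceding Corollary \ref{CompLongCyc}: $d\mid h$ implies $\ord_d(x)\mid\ord_h(x)$. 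This handles every $d$ with odd order.

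For the final equivalence, one direction is immediate by specializing $d=p$. For the converse, suppose $\ord_p(x)$ is even for every prime $p\mid n$. Then for any $d\mid n$ with $d\neq1$, pick a prime $p\mid d$. The order $\ord_p(x)$ divides $\ord_d(x)$, so $\ord_d(x)$ is even; equivalently, $h=1$. The only point requiring care is the parity claim for prime powers, which depends on $p$ being odd; that is exactly where the hypothesis that $n$ is odd enters. Otherwise the argument is routine bookkeeping with the CRT formula for orders.
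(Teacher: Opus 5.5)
Your proposal is correct and follows essentially the same route as the paper: the same explicit $h$ built from the prime powers $p^{\a}\,\|\,n$ with $\ord_p(x)$ odd, the same parity argument via $\ord_{p^\b}(x)=p^k\ord_p(x)$ together with the CRT formula expressing $\ord_d(x)$ as an lcm, and the same step $d\mid h\Rightarrow\ord_d(x)\mid\ord_h(x)$. You also note that $\ord_h(x)$ is itself odd, which the paper's proof does not state; this is useful because Theorem~\ref{componecycle} later treats $h$ as the largest divisor of $n$ with odd order.
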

\begin{proof} 
Let $n=p_1^{\a_1}\cdots\,p_m^{\a_m}$. Define $h$ to be the product of all $p_i^{\a_i}$ such that $\ord_{p_i}(x)$ is odd, or $h:=1$ when there are no such $p_i$. Now let $d\mid n$ with  $\ord_{d}(x)$ odd. Then $d=p_{i_1}^{\b_1}\cdots\,p_{i_s}^{\b_s}$, where $\b_j:=\a_{i_j}$, and 
$$
\ord_{d}(x)=\lcm\big(\!\ord_{p^{\b_1}_{i_1}}(x),\dots,\,\ord_{p_{i_s}^{\b_s}}(x)\big). 
$$ 

Since all $p\mid n$ are odd for $n$  odd and $\ord_{p^{\b}}(x)=p^k\ord_{p}(x)$, the lcm can only be odd when all $\ord_{p_{i_j}}(x)$ are odd. Therefore, $p_{i_j}$ are among the $p_i$ used to form $h$. Moreover, $\b_j\leq\a_{i_j}$ since $d\mid n$, so $d\mid h$ and $h$ has the claimed property. Then $\ord_d(x)\mid\ord_h(x)$ when $\ord_d(x)$ is odd because $d\mid h$. On the other hand, if $\ord_{p}(x)$ is even for all prime $p\mid n$ then $h=1$ and $\ord_d(x)$ is even for all $d\mid n$, $d\neq1$, and vice versa.
\end{proof}
We are now ready to characterize monotypic cosets in composite dimensions.
\begin{theorem}\label{componecycle} Let $2,3\nmid n\geq5$, $a\in\+{\Z}_n$ be panmagic and $N=\ord_n(a^2)+1$ be the strict arity of $\pi_a\mD_n$. Then all of its permutations have the same cycle type if and only if $\ord_{p}(a^2)$ is even for every prime $p\mid n$.
\end{theorem}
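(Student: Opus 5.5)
We reduce everything to the slopes: every permutation of $\pi_a\mD_n$ is $\pi_{\e a,b}$ with $\e\in\sqrt{1}_n$. Since $\e a$ is panmagic by Corollary \ref{PanRad}, each such permutation has a fixed point. By Lemma \ref{FixConj} it is therefore conjugate to $\pi_{\e a}$, and by Theorem \ref{GAcycletypes} its cycle type is the multiset of numbers $\ord_d(\e a)$, each occurring $\frac{\pfi(d)}{\ord_d(\e a)}$ times, over $d\mid n$. The coset is monotypic if and only if these multisets agree for all $\e\in\sqrt{1}_n$.

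For sufficiency, assume $\ord_p(a^2)$ is even for every prime $p\mid n$. By Lemma \ref{ordaevenodd} applied to $x=a^2$, $\ord_d(a^2)$ is even for every $d\mid n$ with $d\neq1$. Then Lemma \ref{ord+-a}, applied modulo $d$ to $\e a$ (whose square is $a^2$), gives $\ord_d(\e a)=2\ord_d(a^2)$ independently of $\e$. So for each $d$ both the cycle length and the number of cycles are independent of $\e$, and all permutations in the coset have the same cycle type.

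For necessity, assume some $\ord_p(a^2)$ is odd. Let $h\ne1$ be the divisor from Lemma \ref{ordaevenodd} for $x=a^2$: the product of all $p_i^{\a_i}$ with $\ord_{p_i}(a^2)$ odd. By construction $\gcd(h,n/h)=1$. By CRT we choose $\e\in\sqrt{1}_n$ with $\e\equiv-1\pmod{h}$ and $\e\equiv1\pmod{n/h}$. We then compare $\pi_a$ and $\pi_{\e a}$, and the plan is to show their cycle types differ through the odd-length cycles.
\begin{itemize}
\item For $d\mid n$ with $\ord_d(a^2)$ even, both $\ord_d(a)$ and $\ord_d(\e a)$ are even by Lemma \ref{ord+-a}.
\item For $d\mid n$ with $\ord_d(a^2)$ odd, we have $d\mid h$, hence $\e a\equiv -a\pmod d$. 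By Lemma \ref{ord+-a}, for $d\neq1$ exactly one of $\ord_d(a),\ord_d(-a)$ is odd (equal to $\ord_d(a^2)$).
\end{itemize}
Hence the number of points lying in odd cycles of $\pi_a$ is $\sum\pfi(d)$ over $d\mid h$ with $\ord_d(a)$ odd. For $\pi_{\e a}$ the sum runs over the complementary set of $d\mid h$, $d\neq1$; both counts include the fixed point from $d=1$.

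The main obstacle is guaranteeing these two counts differ, since one of them could a priori match the other. To force a difference, we first use the freedom of replacing $a$ by $\pm\e' a$, which does not change the coset. This lets us arrange $\ord_{h}(a)$ odd, so $a^{\ord_h(a^2)}\equiv1\pmod h$. Then $\ord_d(a)$ is odd for every $d\mid h$, since $\ord_d(a)\mid\ord_h(a)$. With this choice, $\pi_a$ has all $h$ points congruent to $0\pmod{n/h}$ in odd cycles, while $\pi_{\e a}$, with $\e a\equiv -a\pmod h$, has only the fixed point in odd cycles. Because $h>1$, the counts $h$ and $1$ differ, so the two cycle types differ and the coset is not monotypic.
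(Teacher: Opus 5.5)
Your proof is correct. The reduction to the slopes $\varepsilon a$ and the sufficiency direction are the same as in the paper; the necessity direction uses a different invariant to tell two permutations apart.

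\textbf{How the paper argues necessity.} It takes $h$ to be the largest divisor of $n$ with $\ord_h(a^2)$ odd, and picks the sign so that $\ord_h(\varepsilon a)=2\ord_h(a^2)$. It then shows that $\pi_{-\varepsilon a}$ has no cycle of length $2\ord_h(a^2)$. Any $h'$ producing such a cycle would satisfy $\ord_{h'}(a^2)=\ord_h(a^2)$, which is odd, so $h'\mid h$. That forces $2\ord_h(a^2)\mid\ord_h(-\varepsilon a)=\ord_h(a^2)$, a contradiction.

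\textbf{How you argue necessity.} You normalize the sign so that $\ord_h(a)$ is odd. You then count points lying in odd-length cycles, which is a conjugacy invariant. For $\pi_a$ there are $h$ of them, namely the multiples of $n/h$. For the flipped slope only the fixed point remains.

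\textbf{Comparison.} Your invariant is more transparent and gives an explicit quantitative difference ($h$ versus $1$) with no divisibility contradiction. The paper's argument instead pinpoints a specific cycle length that is present in one permutation and absent in the other.

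\textbf{Two small remarks.}
First, your CRT construction of $\varepsilon$ is unnecessary. For $d\nmid h$, both $\ord_d(a)$ and $\ord_d(-a)$ are already even by Lemma~\ref{ord+-a}, so $\varepsilon=-1$ works. You then end up comparing the same pair $\pi_{\pm a}$ as the paper.
Second, to arrange $\ord_h(a)$ odd you implicitly need $\ord_h(a^2)$ to be odd. This holds because $\ord_{p^{\alpha}}(x)=p^k\ord_p(x)$ for odd $p$, so $\ord_h(a^2)$ is an lcm of odd numbers. The paper leaves the same step implicit.
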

\begin{proof} 
Suppose that $\ord_{p}(a^2)$ is even for every prime $p\mid n$. Then, by Lemma \ref{ordaevenodd}, $\ord_{d}(a^2)$ is even for all $d\mid n$, $d\neq1$. But all elements of $\pi_a\mD_n$ are conjugate to some $\pi_{\e a}$ with $\e\in \sqrt{1}_n$ by Lemma \ref{FixConj}, and the cycle lengths in $\pi_{\e a}$ and their counts depend only on $d$ and $\ord_d(\e a)$ by Theorem \ref{GAcycletypes}. Since $\ord_d(a^2)$ is even for all $d\mid n$ and $d\neq1$, $\ord_d(\e a)=2\ord_d(a^2)$, and thus,  does not depend on $\e$. This is also true for $d=1$ since $\ord_1(x)=1$ for any $x$. Therefore, every $\pi_{\e a}$ with $\e\in\sqrt{1}_n$ has the same cycle type, and thus, every permutation in $\pi_a\mD_n$ will also have the same cycle type.

Conversely, suppose that $\ord_p(a^2)$ is odd for some prime $p\mid n$. Then, by Lemma \ref{ordaevenodd}, we can construct $h>1$ as the largest divisor of $n$ with $\ord_h(a^2)$ odd. We know that there exists some $\pi_{\e a}$ where $\ord_h(\e a)=2\ord_h(a^2)$ but $\ord_h(-\e a)=\ord_h(a^2)$. If $\pi_{-\e a}$ has the same cycle type as $\pi_{\e a}$ then there is $h'$ with $\ord_{h'}(-\e a)=2\ord_h(a^2)$. But then $\ord_{h'}(a^2)=\ord_h(a^2)$ is odd and $h'\mid h$ by the characteristic property of $h$, so 
$$
2\ord_h(a^2)=\ord_{h'}(-\e a)\,\,\big|\,\ord_{h}(-\e a)=\ord_h(a^2),
$$
a contradiction. Thus, the cycle types of $\pi_{\e a}$ and $\pi_{-\e a}$ are distinct.
\end{proof}
For prime powers, since $\ord_{p^\a}(a^2)$ has the same parity as $\ord_{p}(a^2)$ monotypism is determined by the strict arity $N=\ord_{p^\a}(a^2)+1$. Other cases when $N$ is enough to determine monotypism are when $N-1$ is odd or a power of $2$. But generally, whether $\pi_a\mD_n$ is monotypic cannot be told from $N$ alone.

As we prove next, monotypism imposes a restrictive condition on $n$. Quite unexpectedly, it is the same condition as for the existence of strictly ternary panmagic cosets of $\mD_n$ and $\sD_n$. 
\begin{theorem}
$\mD_n$ has monotypic panmagic cosets if and only if all prime factors of $n$ are of the form $4k+1$. Moreover, then $\mD_n$ also has strictly ternary panmagic cosets with all permutations in them having only $4$-cycles and a single fixed point. 
\end{theorem}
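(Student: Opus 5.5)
The plan is to reduce both directions to Theorem \ref{componecycle}, which says that a panmagic coset $\pi_a\mD_n$ is monotypic exactly when $\ord_p(a^2)$ is even for every prime $p\mid n$. I will show that this parity condition forces $p\equiv1\pmod 4$. Conversely, when all prime factors are $4k+1$, I will take $a$ with $a^2\equiv-1\pmod n$ and read off its cycle type from Theorem \ref{GAcycletypes}.

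For necessity, suppose $\pi_a\mD_n$ is a monotypic panmagic coset. By Theorem \ref{componecycle}, $\ord_p(a^2)$ is even for every prime $p\mid n$. Now $a^2$ lies in $QR_p$, which is cyclic of order $\frac{p-1}2$, so $\ord_p(a^2)$ divides $\frac{p-1}2$. Hence $\frac{p-1}2$ is even, i.e. $p\equiv1\pmod 4$. A more hands-on version of the same step: $a^2$ has even order $2m$ modulo $p$, so $(a^2)^m\equiv-1\pmod p$. This exhibits $-1$ as the square $(a^m)^2$ modulo $p$, which forces $p=4k+1$ by the classical criterion quoted in Corollary \ref{PanTerComp}.

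For sufficiency, assume every prime factor of $n$ is $4k+1$. Then $-1$ is a quadratic residue modulo each $p_i^{\a_i}$. This holds since $\+{\Z}_{p_i^{\a_i}}$ is cyclic of order divisible by $4$, so it contains an element of order $4$ whose square is the unique element $-1$ of order $2$. By CRT there is $a\in\+{\Z}_n$ with $a^2\equiv-1\pmod n$ (not merely modulo $\rad(n)$). Then $a^2-1\equiv-2$ is a unit, so $a$ is panmagic, and $\ord_n(a^2)=2$. By Theorem \ref{compstrictarity}, applied in its $\mD_n$ form, $\pi_a\mD_n$ is strictly ternary. For each prime $p\mid n$ we have $a^2\equiv-1\not\equiv1\pmod p$, so $\ord_p(a^2)=2$ is even. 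Theorem \ref{componecycle} then gives monotypism, which also proves existence for the first claim.

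Finally, for the cycle type, every element of the coset has slope $\e a$ with $\e\in\sqrt{1}_n$, and $(\e a)^2=a^2\equiv-1$. So for every divisor $d\mid n$ with $d>1$ we get $(\e a)^2\equiv-1\not\equiv1\pmod d$ and $(\e a)^4\equiv1\pmod d$, hence $\ord_d(\e a)=4$. Each element is panmagic, so it has a fixed point, and Theorem \ref{GAcycletypes} applies. It shows that all nonzero residues lie in $4$-cycles and the residue for $d=1$ is the single fixed point. I expect no serious obstacle. The one point to check carefully is that $a^2\equiv-1$ is needed modulo the full $n$ and not just modulo $\rad(n)$, so that $\ord_n(a^2)=2$ and the coset of $\mD_n$, not only of $\sD_n$, is ternary. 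The cyclicity of $\+{\Z}_{p^\a}$ settles this.
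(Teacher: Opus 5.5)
Your proof is correct and follows essentially the same route as the paper. For necessity you combine Theorem \ref{componecycle} with a divisibility argument: you use that $\ord_p(a^2)$ divides $|QR_p|=\frac{p-1}2$, where the paper uses $\ord_p(a)=2\ord_p(a^2)\mid p-1$ via Lemma \ref{ord+-a}. For sufficiency you take a square root $a$ of $-1$ modulo the full $n$, so that $\pi_a\mD_n$ is strictly ternary and $\ord_d(\e a)=4$ for every $d\mid n$ with $d>1$, as in the paper.

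The remaining differences are cosmetic. You obtain monotypism from Theorem \ref{componecycle} and read off the $4$-cycles directly from Theorem \ref{GAcycletypes}. The paper instead gets both from $\ord_n(\e a)=4$ together with the absence of transpositions in panmagic permutations.
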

\begin{proof} 
By assumption, there exists a monotypic panmagic coset $\pi_a\mD_n$, and by Theorem \ref{componecycle}, then $\ord_p(a^2)$ must be even for every prime $p\mid n$. But $\ord_p(a)=2\ord_p(a^2)$ by Lemma \ref{ord+-a}, so $4\mid\ord_p(a)$, and $\ord_p(a)\mid p-1$ by Lagrange's theorem. Thus, $4\mid p-1$ and all prime factors of $n$ must be of the form $4k+1$. 

As we know from number theory, $-1$ is then a quadratic residue modulo $n$ and there exists $\tilde{a}$ with $\tilde{a}^2=-1$ \cite{IrRos}. This $\tilde{a}$ is also panmagic because $\tilde{a}^2-1=-2$ is a unit. Since 
$\ord_n(\tilde{a}^2)=\ord_n(-1)=2$ the strict arity of $\pi_{\tilde{a}}\mD_n$ is $N=\ord_n(\tilde{a}^2)+1=3$, and $\ord_n(\e\tilde{a})=4$ for all $\e\in\sqrt{1}_n$ by Lemma \ref{ord+-a}. As panmagic permutations have no transpositions by Corollary \ref{panmagiccycletype}, $\pi_{\e\tilde{a}}$ can only have $4$-cycles and a single fixed point. But every permutation in $\pi_{\tilde{a}}\mD_n$ is conjugate to one of $\pi_{\e\tilde{a}}$ and so has the same cycle type. Thus, $\pi_{\tilde{a}}\mD_n$ is a strictly ternary panmagic coset with all permutations having only $4$-cycles and a single fixed point.

Conversely, if all prime factors of $n$ are of the form $4k+1$ then we can construct $\pi_{\tilde{a}}\mD_n$ as above, and this coset is monotypic because the number of $4$-cycles in its permutations depends only on $n$. 
\end{proof} 
\noindent In other words, monotypic panmagic cosets of $\mD_n$ exist in  the same dimensions $n$ as its strictly ternary panmagic cosets, which are, incidentally, always monotypic. Recall that strictly ternary panmagic cosets of $\sD_n$ also exist in the exact same dimensions, by Corollary \ref{PanTerComp}.

\subsection{Monotypic cosets of $\sD_n$}

In square-free dimensions, $\sD_n=\mD_n$ and Theorem \ref{componecycle} describes monotypic cosets of both subgroups. The main result of this section is complementary and negative -- when $n$ is not square-free, monotypic cosets of $\sD_n$ do not exist. 

To see why in a simple case, consider the panmagic coset $\pi_2\sD_{25}$, which splits into five cosets $\pi_a\mD_{25}$ with $a=2, 7, 12, 17, 22$. The orders $\ord_5(a)$ are the same for all of them, as $a\equiv2\pmod{5}$, but while $\ord_{25}(2)=\ord_{25}(12)=\ord_{25}(17)=\ord_{25}(22)=20$, $\ord_{25}(7)=4$. These are the longest cycle lengths of the corresponding $\pi_a$, so $\pi_7$ has the cycle type different from the rest. The next lemma generalizes this observation.
\begin{lemma}\label{q2orders} Let $p$ be a prime and $m$ be an integer such that $p\mid m$ but $p^2\nmid m$. Then there exists exactly one $k$ modulo $p$ with $\ord_{p^2}\big(a+km\big)=\ord_p(a)$, while for all other $k$, $\ord_{p^2}\big(a+km\big)=p\ord_p(a)$. 
\end{lemma}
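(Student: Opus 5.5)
The plan is to work in $\+{\Z}_{p^2}$. We tacitly assume $p\nmid a$, so that $a$ is a unit modulo $p$. Write $t:=\ord_p(a)$ and $m=pm'$ with $p\nmid m'$.

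First note that $a+km\equiv a\pmod p$ for every $k$. Hence $\ord_p(a+km)=t$, and by the observation preceding Lemma \ref{ordaevenodd} we get $t\mid\ord_{p^2}(a+km)$. The group $\+{\Z}_{p^2}$ is cyclic of order $p(p-1)$ and $t\mid p-1$, so the order $\ord_{p^2}(a+km)$ divides $pt$ and is a multiple of $t$. Since $p$ is prime, it is therefore either $t$ or $pt$. (This is also the fact $\ord_{p^\a}(x)=p^k\ord_p(x)$ quoted earlier.) It remains to count the $k$ modulo $p$ for which $(a+km)^t\equiv1\pmod{p^2}$.

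Expand by the binomial theorem modulo $p^2$. Since $p^2\mid m^2$, we get
$$
(a+km)^t\equiv a^t+t\,a^{t-1}km\pmod{p^2}.
$$
Write $a^t=1+pc$ for some integer $c$, which is possible because $a^t\equiv1\pmod p$. The condition $(a+km)^t\equiv1\pmod{p^2}$ then becomes
$$
p\big(c+t\,a^{t-1}k m'\big)\equiv0\pmod{p^2},
$$
which is equivalent to the linear congruence
$$
c+t\,a^{t-1}m'\,k\equiv0\pmod p.
$$
The coefficient $t\,a^{t-1}m'$ is invertible modulo $p$: we have $t\mid p-1$, so $p\nmid t$; also $p\nmid a$ and $p\nmid m'$. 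So this congruence has exactly one solution $k$ modulo $p$. For that $k$ the order of $a+km$ is $t$, and for all other $k$ it is $pt$.

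The step needing care is the binomial reduction. One must check that the cross term $t\,a^{t-1}km$ survives modulo $p^2$ only through $m'$, and that $t$ is coprime to $p$. Both hold precisely because $p\,\|\,m$ and $t\mid p-1$. Finally, $k$ modulo $p$ is well defined, since replacing $k$ by $k+p$ changes $a+km$ by $pm\equiv0\pmod{p^2}$.
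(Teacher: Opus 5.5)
Your proof is correct and is essentially the paper's argument. You expand by the binomial theorem modulo $p^2$ (using $p^2\mid m^2$ and $p\nmid t$) so that $(a+km)^t$ becomes linear in $k$, and combine this with the dichotomy $\ord_{p^2}(x)\in\{\ord_p(x),\,p\ord_p(x)\}$. That dichotomy does not follow from cyclicity of $\+{\Z}_{p^2}$ alone; it comes from $x^t=1+py\Rightarrow x^{pt}\equiv1\pmod{p^2}$, which is the fact the paper also cites. The only difference is that you solve the resulting linear congruence in $k$ directly. The paper instead shows that the $p$ residues $(a+km)^t$ modulo $p^2$ are distinct and all of the form $1+lp$, so by counting exactly one of them equals $1$.
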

\begin{proof}
Note that $a+k'm\equiv a+km\pmod{p^2}$ when $k'\equiv k\pmod{p}$ because $pm\equiv0\pmod{p^2}$. In particular, $\ord_{p^2}(a+km)$ depends solely on the residue of $k$ modulo $p$.

Denote $s:=\ord_p(a)$ and consider $\big(a+km\big)^s$. If $(a+k_1m)^s\equiv (a+k_2m)^s\pmod{p^2}$ then expanding the binomial and noticing that $m^2\equiv 0\pmod{p^2}$ gives that 
$$
a^s+sa^{s-1}k_1m\equiv a^s+sa^{s-1}k_2m\pmod{p^2}.
$$
By Fermat's Little Theorem, $s=\ord_p(a)\mid p-1$, so $p\nmid s$ and $p\nmid a$ since $a$ is a unit. Therefore, $k_1m\equiv k_2m\pmod{p^2}$. Since $p\mid m$ but $p^2\nmid m$, $k_1\equiv k_2\pmod{p}$ and the residues of $(a+km)^s$ modulo $p^2$ are distinct for $k=0,\dots,p-1$.

Let us determine these residues. Since $a^s\equiv 1\pmod{p}$, $a^s\equiv 1+l_1p\pmod{p^2}$, and since $p\mid m$, $sa^{s-1}km=l_2p$. Therefore, $(a+km)^s\equiv 1+lp\pmod{p^2}$ for some $0\leq l\leq p-1$. Since there are $p$ distinct residues, every value of $l$ is realized by some $k$. In particular, $l=0$ for some $k_0$ and hence $\big(a+k_0m\big)^s\equiv 1\pmod{p^2}$. But  $\ord_{p^2}(x)=\ord_p(x)$ or $p\ord_p(x)$ for any prime, and $\ord_{p}(a+km)=\ord_p(a)$ for all $k$. Thus, $\ord_{p^2}(a+k_0m)=\ord_p(a)=s$, and for all other $k$ we must have $\ord_{p^2}(a+km)=p\ord_p(a)$. 
\end{proof}
Lemma \ref{q2orders} settles the monotypism question negatively for $n=p^2$, just take $m=p$. But in general, it is not enough to consider only the longest cycles. Still, different cycle types do occur in permutations that are lifts of a single $\pi_a$. Note that $\e_k:=\big(a+k\rad(n)\big)a^{-1}$ is a lifted unity modulo $n$ for any $k$ because $\e_k\equiv 1\pmod{\rad(n)}$. Therefore, $\pi_{a\e_k}=\pi_{a+k\rad(n)}\in\pi_a\sD_n$ for all $k$. When $n$ is not square-free, we will find values of $k$ for which $\pi_{a\e_k}$ have distinct cycle types.
\begin{theorem}\label{SqCycles} Let $2,3\nmid n\geq5$ be a non-square-free integer and $a\in\+{\Z}_n$ be a panmagic unit. Then there are permutations in $\pi_a\sD_n$ with distinct cycle types.
\end{theorem}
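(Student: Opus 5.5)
We use the lifts $\pi_{a\e_k}=\pi_{a+k\rad(n)}\in\pi_a\sD_n$ introduced just before the statement. Every slope $a+k\rad(n)$ is panmagic, since it agrees with $a$ modulo every prime divisor of $n$. So each $\pi_{a+k\rad(n)}$ has a fixed point, and by Theorem \ref{GAcycletypes} its cycle type is determined by the multiset of pairs $\big(\ord_d(a+k\rad(n)),\varphi(d)\big)$ over the divisors $d\mid n$. It therefore suffices to find two values of $k$ whose slopes produce different cycle types. We will compare a single well-chosen invariant: the \emph{largest cycle length}. By Theorem \ref{GAcycletypes} and the divisibility $\ord_{d'}(x)\mid\ord_d(x)$ for $d'\mid d$, this length equals $\ord_n(a+k\rad(n))$.

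\textbf{Setup.} Since $n$ is not square-free, pick a prime $p$ with $p^2\mid n$, and write $n=p^\a n'$ with $\a\geq2$ and $p\nmid n'$. Set $m:=\rad(n)$. Then $p\mid m$ but $p^2\nmid m$, so Lemma \ref{q2orders} applies to $a$ and $m$. It yields a unique residue $k_0$ modulo $p$ with $\ord_{p^2}(a+k_0m)=\ord_p(a)$, while $\ord_{p^2}(a+km)=p\,\ord_p(a)$ for all $k\not\equiv k_0\pmod p$.

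\textbf{Key step.} We want two values $k_1,k_2$ for which the $p$-adic valuations of $\ord_n(a+k_im)$ differ. Write
\[
\ord_n(x)=\lcm\big(\ord_{p^\a}(x),\ord_{n'}(x)\big).
\]
Since $\ord_{p^\a}(x)=p^{j}\ord_p(x)$ with $p\nmid\ord_p(x)$, the $p$-part of $\ord_n(x)$ is $\max\big(p^{j},\,p\text{-part of }\ord_{n'}(x)\big)$. This mixing with $\ord_{n'}$ is the main obstacle: the $p$-part of $\ord_{n'}(x)$ could swamp the difference we are trying to detect.

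We handle it by CRT. Choose $k$ with $k\equiv 0\pmod{n'/\rad(n')}$, which we may do since $n'/\rad(n')$ is coprime to $p$. Then $a+km\equiv a\pmod{n'}$, so $\ord_{n'}$ is the same for all such $k$. Subject to this constraint, $k$ can still take every residue modulo $p$, and indeed every residue modulo $p^{\a-1}$.

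Next we need the behaviour modulo $p^\a$ and not just modulo $p^2$. Take $k$ ranging over multiples of $n'/\rad(n')$. If the residue of $k$ mod $p$ is not $k_0$, then $\ord_{p^2}=p\ord_p(a)$, and by the standard lifting (the valuation of $x^{s}-1$ is $1$ and then increases by one with each factor of $p$) we get $\ord_{p^\a}(a+km)=p^{\a-1}\ord_p(a)$. This is the maximal possible value.

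For the other branch, the cleanest choice is $k$ such that
\[
a+km\equiv \omega\pmod{p^\a},
\]
where $\omega$ is the Teichmüller lift of $a\bmod p$. This is solvable because $\omega\equiv a\pmod p$ and $m/p$ is invertible mod $p^{\a-1}$, and we impose it simultaneously with the congruence modulo $n'/\rad(n')$ by CRT. For this $k$, $\ord_{p^\a}=\ord_p(a)$, which is prime to $p$.

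\textbf{Conclusion.} With $\ord_{n'}$ fixed at a common value $t$, the two choices give
\[
\ord_n=\lcm\big(p^{\a-1}\ord_p(a),\,t\big)\quad\text{and}\quad\ord_n=\lcm\big(\ord_p(a),\,t\big).
\]
The first has $p$-adic valuation at least $\a-1$ larger than that of $\ord_p(a)$. However, $t$ might itself be divisible by $p^{\a-1}$, which would equalise the two lcm's.

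If that happens, we switch invariants. Instead of the longest cycle, we compare the cycle lengths attached to the divisor $d=p^\a$ alone, that is, the multiset of lengths over divisors $d\mid p^\a$. These are precisely the cycles on the elements $z$ with $n/\gcd(z,n)$ a power of $p$.

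This suggests a more robust plan from the outset. By Theorem \ref{GAcycletypes}, count the total number of points lying in cycles whose length is prime to $p$. That count is
\[
\sum_{d\mid n,\ p\nmid \ord_d(x)}\varphi(d).
\]
In the Teichmüller choice, $\ord_d$ is prime to $p$ for $d=p^\a d''$ whenever it is for $d''$. In the generic choice it fails for every $d$ divisible by $p^2$. So the two counts differ by
\[
\sum_{d''}\big(\varphi(p^\a d'')-\varphi(p\,d'')\big)>0,
\]
where $d''$ runs over the divisors of $n'$ with $p\nmid\ord_{d''}(a)$, a set that includes $d''=1$. Since these counts differ, the two permutations have distinct cycle types.
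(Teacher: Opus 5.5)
Your final argument, which counts the points lying in cycles of length prime to $p$, is correct, and it takes a genuinely different route from the paper.

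\textbf{The paper's route.} The paper handles all primes $p_i$ with $p_i^2\mid n$ at once. By CRT it chooses $k$ equal to the special residue of Lemma~\ref{q2orders} at every such $p_i$, compares the adjacent lifts $a+k\rad(n)$ and $a+(k+1)\rad(n)$, and counts cycles of one length, $\ord_{p_{\min}}(a)$, where $p_{\min}$ minimizes $\ord_{p_i}(a)$. Square-free divisors contribute identically because both slopes agree modulo $\rad(n)$. Minimality ensures that for the generic lift no non-square-free divisor produces that length, while $d=p_{\min}^2$ does for the special lift.

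\textbf{Your route.} You need only one prime $p$ with $p^2\mid n$. The extra condition $k\equiv0\pmod{n'/\rad(n')}$ makes both slopes congruent to $a$ modulo the whole prime-to-$p$ part $n'$, so the two permutations differ only $p$-adically. The invariant $\sum_{d\mid n,\ p\nmid\ord_d(x)}\varphi(d)$ then separates them, with no minimal-order device and no control needed at the other squared primes. The cost is this finer CRT condition and a lift of order prime to $p$ modulo $p^\alpha$. The Teichm\"uller lift works, but the residue $k_0$ of Lemma~\ref{q2orders} would already suffice, because the divisors $p^2d''$ alone make the difference.

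\textbf{Two things to tidy.}

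First, remove the exploratory paragraphs. The longest-cycle comparison can fail, as you yourself note. The comparison of ``cycles attached to $d=p^\alpha$'' is not a cycle-type invariant, since the cycle type does not record which divisor a cycle comes from.

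Second, the difference of the two counts is misstated. For the Teichm\"uller slope, $p\nmid\ord_{p^\beta d''}$ whenever $p\nmid\ord_{d''}(a)$, for every $0\le\beta\le\alpha$, not only for $\beta=\alpha$. So the excess is
$\sum_{d''}\sum_{\beta=2}^{\alpha}\varphi(p^\beta d'')=(p^\alpha-p)\sum_{d''}\varphi(d'')$, rather than $\sum_{d''}\big(\varphi(p^\alpha d'')-\varphi(pd'')\big)$. You only use that this excess is positive, so the conclusion stands.
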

\begin{proof}
Since $n$ is not square-free there exist prime factors $p_i$  with $p_i^2\mid n$. We will apply Lemma \ref{q2orders} to each $p_i$ with $m:=\rad(n)$. By definition of the radical, $p_i\mid m$ but $p_i^2\nmid m$ for each $i$. For each $i$, the lemma constructs $k_i$ with $\ord_{p_i^2}(a+k_im)=\ord_{p_i}(a)$. By the Chinese Remainder Theorem, there exists  $k$ with $k\equiv k_i\pmod{p_i}$ for all $i$. Since the order depends only on the residue of $k$ modulo $p_i$, we have
$$
\ord_{{p_i^2}}\big(a+km\big)=\ord_{{p_i^2}}\big(a+k_im\big)=\ord_{p_i}(a)
$$ 
for all $i$. Since $k+1\not\equiv k_i\pmod{p_i}$ for any $i$, by Lemma \ref{q2orders}, $\ord_{p_i^2}(a+(k+1)m)=p_i\ord_{p_i}(a)$ for all $i$. Consider $p_{\min}$ with $\ord_{p_{\min}}(a)$ minimal among all $\ord_{p_i}(a)$. Then,
$$
\ord_{p_i^2}\big((a+(k+1)m\big)=p_i\ord_{p_i}(a)>\ord_{p_{\min}}(a).
$$
Let $d$ be a divisor of $n$ that is not square-free. If $p_i^2\mid d$ then $\ord_{p_i^2}(x)\mid \ord_d(x)$, so  
\begin{equation}\label{orddpmin}
\ord_d\big(a+(k+1)m\big)\geq \ord_{p_i^2}\big(a+(k+1)m\big)>\ord_{p_{\min}}(a).    
\end{equation}

Recall from Theorem \ref{GAcycletypes} that a permutation $\pi_x$ has precisely $\frac{\varphi(d)}{\ord_d(x)}$ cycles of length $\ord_d(x)$ for every $d\mid n$, and no other cycles. 
Therefore, $\pi_{a+(k+1)m}$ can only have cycles of length $\ord_{p_{\min}}(a)$ for some square-free $d\mid n$ because non-square-free $d$ are excluded by \eqref{orddpmin}. But for square-free $d$, $\ord_d(a+(k+1)m)=\ord_d(a+km)$ because $m=\rad(n)\equiv0\pmod{d}$. Therefore, cycles of length $\ord_{p_{\min}}(a)$ in $\pi_{a+(k+1)m}$ for a square-free $d$ are in $1$-$1$ correspondence with cycles of the same length and for the same $d$ in $\pi_{a+km}$ . 

However, $\pi_{a+km}$ also has cycles of length $\ord_{p_{\min}}(a)$ coming from divisors that are not square-free. For example, $\ord_{p_{\min}^2}(a+km)=\ord_{p_{\min}}(a)$, and these cannot have counterparts in $\pi_{a+(k+1)m}$. Thus, there are strictly more cycles of length $\ord_{p_{\min}}(a)$ in $\pi_{a+km}$ than in $\pi_{a+(k+1)m}$ and these two permutations have distinct cycle types.
\end{proof}
To illustrate Theorem \ref{SqCycles}, consider $n=5^2\cdot 7^2$ and take $a=2$, $p_1=5$, $p_2=7$. We first find $k_i$ from $\ord_{p_i^2}(2+35k_i)=\ord_{p_i}(2)$ and $k$ from $k\equiv k_i\pmod{p_i}$. They  are $k_1=3$, $k_2=5$ and $k\equiv 33\pmod{35}$. Since $\ord_{7}(2)=3<4=\ord_{5}(2)$ we have $p_{\min}=7$ and $\ord_{p_{\min}}(a)=3$. Since $\ord_5(2)=4$, we know that divisors $d\mid n$ for cycles of length $3$ can only have $7$ as a prime factor, so $d=7$ or $d=49$. We compute
\begin{align*}
\ord_7(2+35k)&=\ord_7\big(2+35(k+1)\big)=3;\\
\ord_{49}(2+35k)&=3,\ \ \ \ord_{49}\big(2+35(k+1)\big)=21.
\end{align*}
Therefore, by Theorem \ref{GAcycletypes}, $\pi_{2+35k}$ has 
$\frac{\varphi(7)}3+\frac{\varphi(49)}3=16$ cycles of length $3$, while $\pi_{2+35(k+1)}$ has only $\frac{\varphi(7)}3=2$ such cycles, and their cycle types are distinct.



\section{Conclusions and open problems}\label{Conc}

We developed a theory of affine panmagic permutations that focuses on their algebraic and combinatorial properties. Their matrices are the simplest kind of panmagic squares and represent configurations of non-attacking queens on a toroidal chessboard, but it is treating them as permutations that unlocked many new results. We hope that our algebraic approach and its generalizations will be useful for solving other problems. Let us briefly outline some adjacent topics and open problems that lie ahead for those willing to pursue the subject further. This paper focused on the general case of dimensions, affine panmagic permutations for prime dimensions were separately studied in \cite{KoLeeP}.

Theorem \ref{GAcycletypes} describes cycle types not only of panmagic permutations but of all affine permutations with fixed points. However, already dihedral permutations may have no fixed points, e.g. the cyclic shift $\k=\pi_{1,1}$ that has a single cycle of length $n$. One would need to know cycle types of general affine permutations to compute the cycle index of $\Aff(\Z_n)$ used in Polya counting. This seems like a challenging extension of the computation of the cycle index of $\Z_n^\times$ in \cite{BK-V}.

Cycles of affine permutations are known as linear congruential sequences in pseudo-random number generation and they are studied in \cite{Mars}. However, cycle types of multidihedral and lifted dihedral permutations without fixed points do not seem to be known in general. A partial result in this direction, that $\pi_{a,b}$ has a single cycle of length $n$ when $a\equiv1\pmod{p}$ for every $p\mid n$ and $b$ is relatively prime to $n$, is known as the Hull-Dobell theorem \cite{HD,Mars}. In our terminology, $a$ from the Hull-Dobell theorem is none other than a lift of unity modulo $n$, a special case of lifted dihedral units.  

Affine panmagic permutations seem to be closely related to the uniform step method for constructing ``natural" panmagic squares, those filled with natural numbers from $1$ to $n^2$. Planck used them to construct modular $n$-queens solutions already in 1900 \cite{BS2}. The method only works in Polya dimensions, and there are intriguing parallels between Lehmer's results on it \cite{Lehm} and ours. More recently, constructions linking natural panmagic squares to non-affine panmagic permutations were also discovered \cite{BS1}.

Counting all panmagic permutations in dimension $n$ is known to be a hard problem, and even good estimates are hard to come by. It is known that there are more than $2^{n/5}$ of them when $n$ is divisible by a $4k+1$ prime, and, conjecturally, the count is asymptotically $\sim n^{\a n}$ for some $\a>0$ \cite{BS2}. Since there are only $\pfi_3(n)n$ affine panmagic permutations and $\pfi_3(n)\leq n$, `almost all' panmagic permutations are non-affine for large $n$. While some special constructions of such permutations are known \cite{BS2}, they are far from producing the entire set or revealing its algebraic structure. 

Non-affine panmagic permutations also present more complex algebraic and combinatorial questions. For example, are there $N$-ary groups consisting of them? Non-affine panmagic cosets of $D_n$ and $\mD_n$ can be ruled out, but we are not even certain about $\sD_n$. And there may be other groups that work. One would have to find pairs of subgroups of $S_n$, one normal in the other, with non-affine panmagic cosets. Some candidate groups that act on general panmagic permutations are considered in \cite{Eng}. What about the cycle types? Although non-affine panmagic permutations have a fixed point, there is no result like Lemma \ref{FixConj} to conjugate them into a simple permutation like $\pi_a$, so describing their cycle types will also require new ideas.

The linear span of permutation matrices of any $N$-ary panmagic subgroup is an $N$-ary algebra of panmagic squares. Many such examples are provided by our Corollary \ref{primeallstrictarity}. So far, only ternary panmagic algebras have been considered in the literature and they have an appealing alternative description in terms of linear algebra. Consider an invertible matrix $Q$ that commutes with both $P_{\phi}$ and $P_{\k}$, and let $E$ be the square matrix with all entries $1$. Matrices $A$ such that $AQ+QA$ is a scalar multiple of $E$ are called $Q$-regular, and they are panmagic squares that form a ternary algebra \cite{Nord}. Thompson's algebra, the linear span of $\pi_2D_5$, is $Q$-regular with $Q=\frac12I+P_{\k}+P_{\k}^{-1}$. Can such $Q$ be found for other spans of ternary groups from our Corollary \ref{primeallstrictarity}? And conversely, which $Q$-regular algebras are spanned by ternary groups and how can those groups be recovered from $Q$? More generally, what is a linear algebra description of $N$-ary panmagic algebras, and when are they linear spans of $N$-ary panmagic groups?

Although affine panmagic permutations are a small fraction of all of them, this tells us little about their share of the space of panmagic squares. Indeed, there are $n!$ permutations in $S_n$, but the dimension of the linear span of all permutation matrices (which is the space of semimagic squares) is only $(n-1)^2+1$. This means that there are massively many linear relations among permutations matrices, and it is {\it a priori} possible that panmagic permutation matrices (or already affine ones) span the entire space of panmagic squares for Polya $n$. Do they? 

When we consider only squares with positive entries and positive linear combinations, the answer is negative for $n>5$, as proved in \cite{AlKi}. For odd $n$, the space of panmagic squares is known to be $(n-2)^2$-dimensional \cite{Hou}, so we could answer the question if we counted linearly independent panmagic  permutation matrices. Questions about linear relations among matrices of group elements belong to the group representation theory. For the affine case, this suggests looking into representation theory of $\Aff(\Z_n)$ and its subgroups. But this is a task for another day.

\bigskip

\noindent {\bf Funding:} This research did not receive any specific grant from funding agencies in the public, commercial, or not-for-profit sectors.

\end{document}